\theoremstyle{plain}
\newtheorem{prop}{Proposition}
\newtheorem{thm}{Theorem}
\newtheorem{lem}{Lemma}
\newtheorem{cor}{Corollary}
\newtheorem{obv}{Observation}
\theoremstyle{definition}
\newtheorem{example}{Example}
\newtheorem{defn}{Definition}
\newtheorem{rem}{Remark}
\theoremstyle{remark}
\newcommand{\lie}[1]{\mathfrak{#1}}
\newcommand\bc{\mathbb C}
\newcommand\bn{\mathbb N}
\newcommand\bz{\mathbb Z}
\DeclareRobustCommand\longtwoheadrightarrow
\def\a{\alpha}
\def\l{\lambda}
\def\d{\delta}
\def\b{\beta}
\def\m{\mu}
\newcounter{cnt}
\def\mydggeometry{\makeatletter\dg@YGRID=1\dg@XGRID=20\unitlength=0.003pt\makeatother}
\makeatother \theoremstyle{remark}
\numberwithin{equation}{section}
\def\section{\def\@secnumfont{\mdseries}\@startsection{section}{1}%
  \z@{.7\linespacing\@plus\linespacing}{.5\linespacing}%
  {\normalfont\scshape\centering}}
\def\subsection{\def\@secnumfont{\bfseries}\@startsection{subsection}{2}%
  {\parindent}{.5\linespacing\@plus.7\linespacing}{-.5em}%
  {\normalfont\bfseries}}
\begin{document}


\title{Lattice of dominant weights of Affine Kac-Moody algebras}

\author{Krishanu Roy}
\thanks{The author was partially supported by Raman-Charpak Fellowship (2019) and ISF Grant no. 1221/17 .}
\address{Bar Ilan University, Ramat Gan, Israel}
\email{krishanur@imsc.res.in}
\subjclass[2010]{17B10, 17B67}
\keywords{dominant weights, basic cell, covering relations}

\maketitle

\begin{abstract}
The dual space of the Cartan subalgebra in a Kac-Moody algebra has a partial ordering defined by the rule that two elements are related if and only if their difference is a non-negative or non-positive integer linear combination of simple roots. In this paper, we study the subposet formed by dominant weights in affine Kac-Moody algebras. We give a more explicit description of the covering relations in this poset. We also study the structure of basic cells in this poset of dominant weights for untwisted affine Kac-Moody algebras of type $A$. 
\end{abstract}

\section{Introduction}

Let $\lie{g}$ denote a Kac-Moody algebra with Cartan subalgebra $\lie{h}$ and set of roots $\Phi$. The dual space $\lie{h}^*$ has a partial ordering defined as follows: $\l> \m$ if and only if $\l-\m\in\bn\Phi^+$ where $\Phi^+$ denote the set of positive roots. The subposet of dominant weights is particularly of great interest because of its connection to the representation theory of integrable highest weight module over $\lie{g}$. To give one example of this, consider a finite, affine or strictly hyperbolic Lie algebra $\lie{g}$ and an integrable highest weight module $L(\l)$ over $\lie{g}$. The integrability of $L(\l)$ implies that $\l$ is a dominant weight. The module $L(\l)$ is $\lie{h}$-diagonalizable and has a weight space decomposition $L(\l)=\oplus_{\m\in P(\l)}L(\l)_\m$ where $P(\l)$ is the set of weights. The set $P(\l)$ is completely determined up to Weyl conjugacy by the dominant weights contained in it, i.e. $P(\l)=W(P(\l)\cap \Lambda^+)$ where $W$ denote the Weyl group and $\Lambda^+$ denote the set of dominant weights corresponding to $\lie{g}$. The set $P(\l)\cap \Lambda^+$ can be described by the partial ordering on $\Lambda^+$ as follows: $\m\in P(\l)\cap \Lambda^+$ if and only if $\m\leq \l.$

\medskip 

In fact, the original motivation of Stembridge to study this partial order in \cite{stem} was to compute the weight multiplicities in the finite dimensional highest weight modules over finite type Kac-Moody algebras by using Freudenthal's algorithm. It is useful to know the explicit descriptions of the covering relations in the poset of dominant weights towards calculating this multiplicity. The covering relations in this poset of dominant weights for finite type Lie algebras were described explicitly in \cite{stem} (See also \cite{bry}). Here we generalize the arguments of \cite{stem} to describe the partial order in the affine case (e.g. Theorem \ref{1sttheo} is the affine version of Theorem 2.6 of \cite{stem}).

\medskip

Another motivation to study this partial order for the affine Kac-Moody algebras arose while studying the atomic decomposition of the characters of integrable highest weight modules over affine Kac-Moody algebras. In \cite{lus}, Lusztig defined a $t$-analogue $K_{\l,\m}(t)$ of the weight multiplicity of $\m$ in the irreducible highest weight representation with the highest weight $\l$ over finite type Kac-Moody algebras generalizing the Kostka-Foulkes polynomials which are just $t$-analogue of Kostka-Foulkes numbers. In \cite{las}, Lascoux proved the decomposition of Kostka-Foulkes polynomials into {\em{atomic polynomials}} (See also \cite{shi}). Later in \cite{ced}, the atomic decomposition was formulated for all finite type Kac-Moody algebras and was proved for a large number of cases while conjecturing that it holds more generally. The explicit description of the covering relations and the basic cell structure in the poset of dominant weights played an important role in the proof.  

\medskip

In this paper, we study the poset of dominant weights for all affine type Lie algebras. We prove several results about the structure of the poset ($\Lambda^+,\leqslant$) for affine Lie algebras, some of which can be realized as generalizations of the poset structure of dominant weights in complex semisimple Lie algebras. For example, we prove that connected components of ($\Lambda^+,\leqslant$) are lattices (Lemma \ref{lattice}). We also prove that $\l$ covers $\m$ in this ordering only if $\l-\m$ is either the canonical imaginary root $\d$ or it belongs to a distinguished subset of positive real roots (Theorem \ref{1sttheo}). 

\medskip

 The paper is organized as follows. In section 2, we set up the notations, recall some basic facts and deduce some basic results about the dominant weights of affine Lie algebras. In section 3, we present a detailed analysis of the covering relations in ($\Lambda^+,\leqslant$). In particular, we prove that a dominant weight $\l$ is covered by $\l+\d$ only if $\l$ is a fundamental weight with only one possible exception (Lemma \ref{fun}). In section 4, we analyze the basic cell structure of ($\Lambda^+,\leqslant$) for the untwisted affine type $A$. We prove that the basic cells are of shape either diamond or pentagon (Theorem \ref{cell}). The basic cell structures for other affine Lie algebras can be studied as well using Theorem \ref{2ndthe}, and we hope to present the results elsewhere.

\medskip

{\bf{Acknowledgements:}} The author is extremely grateful to C\'edric Lecouvey for suggesting to look into the covering relations and basic cell structure of the poset of dominant weights of affine Lie algebras, and for many illuminating discussions. The author also thanks him for his detailed comments on this manuscript. The author acknowledges hospitality and excellent working conditions at Institut Denis Poisson (UMR CNRS 7013), University of Tours, where part of the work was done.

\section{Preliminaries}

We denote the set of complex numbers by $\bc$ and the set of integers, non-negative integers, and positive integers by $\bz$, $\bz_{\geq 0}$, and $\bz_{> 0}$ respectively. We refer to \cite{Kac} for the general theory of affine Lie algebras and affine root systems. 

\medskip

Through out this article, $A=(a_{ij})_{i,j\geq 0}^n$ will denote an affine generalized Cartan matrix (GCM) of order $n+1$, $\lie{g}(A)$ its corresponding Kac-Moody algebra with set of roots $\Phi$, simple roots $\a_0,\a_1,\cdots\a_n$, co-roots $\a_0^\vee,\a_1^\vee,\cdots\a_n^\vee$, Dynkin diagram $S(A)$ and Cartan subalgebra $\lie{h}$. So $\a_j(a_i^{\vee})=a_{ij}$ is non-positive for $i \neq j$ and equal to 2 for $i=j$. Let $\mathring{A}$ be the matrix obtained from $A$ by deleting the 0-th row and column. We will call $\mathring{A}$ to be the finite part of $A$.

\medskip

 Let ( , ) denote the normalized invariant form [\cite{Kac}, Page 81] on $\lie{g}(A)$. Notice that its restriction on the root lattice induces a positive semi-definite bilinear form [\cite{Kac}, Proposition 4.7]. In a simply laced diagram all roots are considered both short and long roots. Let $$\Lambda=\{\lambda\in\lie{h}^*: \lambda(\a_i^\vee)\in\bz \text{ for all } 0\leq i\leq n\}$$
denote the set of integral weights, where $\lie{h}$ is the Cartan subalgebra of  $\lie{g}(A)$. 
Let $$\Lambda^+=\{\lambda\in\lie{h}^*: \lambda(\a_i^\vee)\in\bz_{\geq 0} \text{ for all } 0\leq i\leq n\}$$ denote the set of dominant integral weights. There is a partial order $\leqslant$ on $\Lambda$ given by $\mu\leq\lambda$ if and only if $\l-\mu$ is a non negative integral sum of simple roots. Our goal is to study the covering relations of this partial order restricted to the set of dominant weights.

\medskip

Let $c$ denote the canonical central element and $\delta$ denote the canonical imaginary root of $\lie{g}(A)$. Then $\delta=\sum_{i=0}^n a_i\a_i $ and $c=\sum_{i=0}^n a_i^{\vee}\a_i^{\vee}$ where $a_0,a_1,\cdots,a_n$ (resp. $a_0^{\vee},a_1^{\vee},\cdots,a_n^{\vee}$) be the positive numerical labels of $S(A)$ (resp. $S(A^T)$) in Table Aff. in [\cite{Kac}, Page (54,55)]. Let  $$\Lambda^+_m= \{\lambda\in\Lambda^+ : \lambda(c)=m\}$$ denote the set of dominant weights of level m. Here, we make our first observation which follows immediately from the fact that $\a_i(c)=0$ for all $0\leq i \leq n$.
\vspace{1em}

\medskip

\begin{obv}\label{level}
If $\mu\leq\lambda$ in ($\Lambda^+,\leqslant$), then $\lambda(c)=\mu(c)$ i.e. all dominants weights of a connected component of ($\Lambda^+,\leqslant$) lie in $\Lambda^+_m$ for some fixed m.
\end{obv}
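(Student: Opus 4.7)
The statement unpacks into two easy assertions, both of which follow from the single fact that every simple root vanishes on the canonical central element.

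The plan is to first reduce to the defining property $\alpha_i(c) = 0$ for all $0 \le i \le n$. This is standard, but worth recalling: since $c = \sum_{i=0}^n a_i^\vee \alpha_i^\vee$, the value $\alpha_j(c) = \sum_i a_i^\vee \alpha_j(\alpha_i^\vee) = \sum_i a_i^\vee a_{ij}$ equals the $j$-th entry of $A^T$ applied to the vector of dual labels, which is zero by the very definition of the labels $a_i^\vee$ of $S(A^T)$ as a null vector of the transposed Cartan matrix. One-line proof, no obstacle.

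With this in hand, suppose $\mu \leqslant \lambda$ in $(\Lambda^+, \leqslant)$. By the definition of the partial order, there exist $k_0, \dots, k_n \in \mathbb{Z}_{\geq 0}$ with $\lambda - \mu = \sum_{i=0}^n k_i \alpha_i$. Evaluating at $c$ and using linearity gives
\[
(\lambda - \mu)(c) = \sum_{i=0}^n k_i \, \alpha_i(c) = 0,
\]
hence $\lambda(c) = \mu(c)$. This handles the comparable case.

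For the connected-component assertion, I would invoke the first part along each edge of the Hasse diagram. Two dominant weights in the same connected component are joined by a finite zigzag $\lambda = \nu_0, \nu_1, \dots, \nu_k = \mu$ of covering relations (each consecutive pair is comparable in one direction or the other), so by the previous paragraph $\nu_0(c) = \nu_1(c) = \cdots = \nu_k(c)$. Setting $m = \lambda(c)$ places the entire component inside $\Lambda^+_m$. There is really no hard step here; the whole content of the observation is the vanishing $\alpha_i(c)=0$, which is why the author flags it as immediate.
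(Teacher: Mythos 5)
Your proof is correct and follows exactly the route the paper intends: the paper offers no formal argument beyond noting that the observation ``follows immediately from the fact that $\alpha_i(c)=0$ for all $0\leq i\leq n$,'' and your write-up simply makes that one-liner explicit (including the correct verification that $\alpha_j(c)=\sum_i a_i^\vee a_{ij}=0$ since the dual labels form a null vector of $A^T$). The zigzag argument for connected components is the standard and intended reading of the second clause, so there is nothing to add.
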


\medskip 

\begin{example}
Let $A$ be the GCM of type $A_n^{(1)}$. The Dynkin diagram $S(A)$ corresponding to $A$ is cyclic (for $n\geq 2$). The finite part $\mathring A$ of $A$ is of type $A_n$. The canonical central element is just the sum of simple co-roots i.e.  $c=\sum_{i=0}^n \a_i^{\vee}$. Similarly the canonical imaginary root is the sum of simple roots i.e. $\delta=\sum_{i=0}^n \a_i $. 
\end{example}

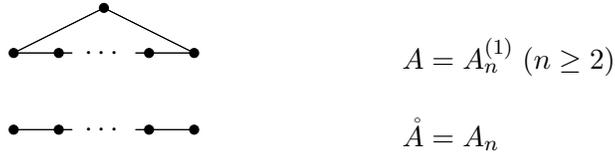
\begin{figure}[h]
\begin{tabular}{rp{2cm}l}
 
\begin{tikzpicture}[scale=0.6]
    \filldraw[] (0,0) -- +(1.3,0);
      \filldraw[] (2.7,0) -- +(1.3,0);
     \filldraw[] (0,0) -- +(2,1);
        \filldraw[] (4,0) -- +(-2,1);
        \foreach \x in {0,1}
                 {    \filldraw[] (\x,0)  circle (.1cm); }
                 \foreach \x in {3,4}
             {    \filldraw[] (\x,0)  circle (.1cm); }
\filldraw[] (2,1) circle (.1cm);
             \draw (2,0) node[] {$\cdots$};
\end{tikzpicture}

 && $A={A}_n^{(1)} \; (n \geq 2)$\\ \\
\begin{tikzpicture}[scale=0.6]
    \filldraw[] (0,0) -- +(1.3,0);
      \filldraw[] (2.7,0) -- +(1.3,0);
        \foreach \x in {0,1}
                 {    \filldraw[] (\x,0)  circle (.1cm); }
                 \foreach \x in {3,4}
             {    \filldraw[] (\x,0)  circle (.1cm); }
             \draw (2,0) node[] {$\cdots$};
\end{tikzpicture}
  && $\mathring A ={A}_n $\\ \\
\end{tabular}
\caption{Dynkin diagrams}
\end{figure}

\medskip

Here, we recall some basic facts and deduce some basic results related to affine root systems. 

\medskip

\begin{defn}
A weight $\l $ is called a fundamental weight corresponding to a vertex $i$ in $S(A)$ if $\l(\a_j^{\vee})=\d_{ji}$ for all $j$.
\end{defn}

\medskip

We define an element $\omega_0\in\lie{h}^*$ determined uniquely by 
$$\omega_0(\a_0^{\vee})=1 \text{ and } \omega_0(h)=0 \text{ for } h\in\lie{h}\setminus \bc \a_0^{\vee }.$$

Notice that $\omega_0$ is a fundamental weight corresponding to the 0-th vertex. Recall that we have dim $\lie{h}^*=2(n+1)-rank (A)=n+2$, as $rank (A)=n$. The following proposition gives us a basis of $\lie{h}^*$. 

\begin{prop}[\cite{car}, Proposition 17.4]
 $\omega_0,\a_0,\a_1,\cdots,\a_n$ form a basis of $\lie{h}^*$.
\end{prop}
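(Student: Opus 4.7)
The plan is a dimension count reduced to a linear independence check. Since the excerpt has just recorded $\dim \lie{h}^* = n+2$ and we are handed exactly $n+2$ vectors, it suffices to show that $\omega_0,\alpha_0,\alpha_1,\ldots,\alpha_n$ are linearly independent in $\lie{h}^*$. The reason one cannot simply evaluate on the simple coroots $\alpha_0^\vee,\ldots,\alpha_n^\vee$ alone is that these coroots span only an $(n+1)$-dimensional subspace of $\lie{h}$, so an additional linear functional is needed to see the relation fully — this is exactly where the scaling element enters.

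The key additional ingredient I would import is the scaling element $d\in \lie{h}$ from the standard realization of $\lie{g}(A)$, characterized by $\alpha_0(d)=1$ and $\alpha_i(d)=0$ for $1\le i\le n$, together with the property that $\{\alpha_0^\vee,\alpha_1^\vee,\ldots,\alpha_n^\vee,d\}$ is a basis of $\lie{h}$. Under the intended reading of the definition of $\omega_0$ given above (namely, $\omega_0$ vanishes on the natural complement $\mathrm{span}(\alpha_1^\vee,\ldots,\alpha_n^\vee,d)$ of $\bc\alpha_0^\vee$), one has $\omega_0(d)=0$. Now suppose
$$c\,\omega_0 \;+\; \sum_{i=0}^n c_i\,\alpha_i \;=\; 0 \quad \text{in } \lie{h}^*.$$
Applying both sides to $d$ picks off only the $\alpha_0$ term and forces $c_0=0$ instantly. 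Applying the residual relation $c\,\omega_0+\sum_{i=1}^n c_i\alpha_i=0$ to $\alpha_j^\vee$ for each $j=1,\ldots,n$, and using $\omega_0(\alpha_j^\vee)=\delta_{j,0}=0$, produces the homogeneous system
$$\sum_{i=1}^n c_i\,a_{ji}=0,\qquad j=1,\ldots,n,$$
whose coefficient matrix is exactly the finite part $\mathring{A}$. Since $\mathring{A}$ is a Cartan matrix of finite type for every affine GCM $A$ (a standard fact from Kac Ch.~4), it is invertible, so $c_1=\cdots=c_n=0$. Then $c\,\omega_0=0$, and since $\omega_0(\alpha_0^\vee)=1$ we conclude $c=0$.

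The essentially only conceptual obstacle is the appeal to invertibility of $\mathring{A}$: without it one could only conclude $(c_1,\ldots,c_n)\in \ker\mathring{A}$, which would not give linear independence. The hybrid structure of the affine setup — removing the $0$-th node yields a finite type Dynkin diagram — is precisely what makes the argument succeed, and it is also why bringing in the scaling element $d$ to peel off the $c_0\alpha_0$ contribution \emph{before} inverting $\mathring{A}$ is essential: the full affine GCM $A$ itself has a one-dimensional kernel (spanned by $(a_0,\ldots,a_n)$, since $\delta$ pairs trivially with every coroot), so one cannot invert $A$ directly on all $n+1$ coefficients at once.
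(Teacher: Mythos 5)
Your argument is correct: reducing to linear independence via the dimension count $\dim\lie{h}^*=n+2$, using the scaling element $d$ to isolate $c_0$, and then invoking the invertibility of the finite-type Cartan matrix $\mathring{A}$ to kill $c_1,\dots,c_n$ is a complete and standard proof, and your observation that one must separate off $\a_0$ before inverting (since $A$ itself has kernel spanned by the marks of $\d$) correctly identifies the one genuine subtlety. The paper offers no proof of its own, merely citing Carter's Proposition 17.4, so there is nothing to compare against; your reading of the paper's (literally vacuous) definition of $\omega_0$ as vanishing on the complement $\mathrm{span}(\a_1^\vee,\dots,\a_n^\vee,d)$ is also the intended one.
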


\medskip

Given an element $\l\in\Lambda^+_m$, we look at the coefficient of $\omega_0$ when $\l$ is expressed in terms of the above basis. The following lemma shows that the coefficient of $\omega_0$ is the same as the level of $\l$. This lemma also simultaneously justifies Observation \ref{level}.

\begin{lem}[\cite{car}, Lemma 20.1]
Let $\l\in\Lambda^+_m$. Then $\l=m\omega_0+\sum\limits_{i=0}^n\l_i\a_i$ for some $\l_i$. 
\end{lem}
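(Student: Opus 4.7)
The plan is to write $\lambda$ in the given basis and then pin down the coefficient of $\omega_0$ by evaluating on the canonical central element $c$. By the preceding proposition we may write
\[
\lambda = a\,\omega_0 + \sum_{i=0}^n \lambda_i\,\a_i
\]
for some scalars $a,\lambda_0,\ldots,\lambda_n\in\bc$, and the only thing to verify is $a=m$.

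To extract $a$, I would apply both sides to $c=\sum_{i=0}^n a_i^\vee \a_i^\vee$. Since $\a_i(c)=0$ for every simple root (this was already noted before Observation \ref{level}), the sum $\sum_i \lambda_i\,\a_i$ contributes $0$, so
\[
\lambda(c) = a\,\omega_0(c) = a\sum_{i=0}^n a_i^\vee\,\omega_0(\a_i^\vee).
\]
By the defining property of $\omega_0$, we have $\omega_0(\a_i^\vee)=\delta_{i,0}$, so the sum on the right collapses to $a_0^\vee$. Hence $\lambda(c)=a\cdot a_0^\vee$, i.e.\ $m=a\cdot a_0^\vee$.

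The final step is to observe that in every affine type (both untwisted and twisted) the dual label $a_0^\vee$ equals $1$; this can be read off directly from Table Aff in \cite{Kac}. Consequently $a=m$, which is exactly the claim. The argument is entirely formal once one has the basis from the previous proposition, and the only ``step with substance'' is the uniform check that $a_0^\vee=1$ across the affine classification, so the main (very minor) obstacle is simply citing this numerical fact rather than any conceptual difficulty.
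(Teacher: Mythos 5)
Your argument is correct. The paper states this lemma with only a citation to Carter (Lemma 20.1) and gives no proof of its own, and your route --- expand $\lambda$ in the basis $\omega_0,\alpha_0,\dots,\alpha_n$ from the preceding proposition, evaluate on $c$ using $\alpha_i(c)=0$ and $\omega_0(\alpha_i^\vee)=\delta_{i0}$, and invoke the uniform fact $a_0^\vee=1$ from Table Aff of \cite{Kac} --- is exactly the standard argument one would supply here.
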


\medskip

Now we look at the poset ($\Lambda^+,\leqslant$) which is of our primary interest. First notice that Observation \ref{level} proves that ($\Lambda^+,\leqslant$) has infinitely many connected components. The fact that $\l\in\Lambda^+$ if and only if $\l+n\d\in\Lambda^+$ for all $n\in\bz$, proves that none of these connected components has either a maximum or a minimum element. Nevertheless, the next lemma shows that each connected component of ($\Lambda^+,\leqslant$) has a lattice structure.

\begin{lem}\label{lattice}
Each connected component of ($\Lambda^+,\leqslant$) is a lattice.
\end{lem}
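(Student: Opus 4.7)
The plan is to use the basis $\omega_0, \a_0, \ldots, \a_n$ of $\lie{h}^*$ provided above and to exploit the sign pattern of the Cartan matrix. By Observation \ref{level}, any two weights $\l, \m$ in the same connected component share a common level $m$, and then the cited lemma writes them uniquely as $\l = m\omega_0 + \sum \l_i \a_i$ and $\m = m\omega_0 + \sum \m_i \a_i$. Since the two weights are joined by a chain of covering relations, their difference lies in $\bz\Phi$, so each $\l_i - \m_i$ is an integer. Consequently, restricted to this component, the order $\leqslant$ becomes coordinatewise: $\m \leqslant \l$ iff $\m_i \leq \l_i$ for every $i$.

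For the meet I set $\l \wedge \m := m\omega_0 + \sum \min(\l_i, \m_i)\, \a_i$; it differs from $\l$ by a non-negative integer combination of simple roots, so it is integral and sits below both $\l$ and $\m$. To verify dominance, evaluate at $\a_j^\vee$: assuming without loss of generality $\l_j \le \m_j$,
$$(\l \wedge \m)(\a_j^\vee) - \l(\a_j^\vee) = \sum_{i \neq j} \bigl(\min(\l_i, \m_i) - \l_i\bigr)\, a_{ji} \;\geq\; 0,$$
since each summand is a product of two non-positive numbers. Hence $(\l \wedge \m)(\a_j^\vee) \geq \l(\a_j^\vee) \geq 0$. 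The greatest-lower-bound property is then immediate from the coordinatewise description of $\leqslant$.

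For the join the coordinatewise maximum may fail to be dominant, and adding multiples of $\d$ cannot repair this since $\d(\a_j^\vee) = 0$. Instead I show that the set $U$ of common dominant upper bounds of $\l$ and $\m$ is nonempty and has a minimum. Since $\d = \sum a_i \a_i$ with every $a_i > 0$, for $N$ sufficiently large $\l + N\d$ coordinatewise dominates both $\l$ and $\m$ and remains dominant, so $U$ is nonempty. The meet construction above applies inside $U$, showing $U$ is closed under pairwise meets. Fix any $\nu_0 \in U$; in coordinates the set $U \cap [\l, \nu_0]$ is finite (each coordinate is pinned between two integers), nonempty, and meet-closed, so iterating meets yields a minimum element $\nu^*$. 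For arbitrary $\nu \in U$, the meet $\nu \wedge \nu_0$ lies in $U \cap [\l, \nu_0]$, hence $\nu \wedge \nu_0 \geqslant \nu^*$ and so $\nu \geqslant \nu^*$; this identifies $\nu^*$ with $\l \vee \m$.

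The main obstacle is the join construction. The meet falls out cleanly from the sign pattern of the Cartan matrix, but the coordinatewise join need not be dominant and cannot be rescued by adding $\d$. The workaround requires combining three ingredients—the $\d$-shift to exhibit an upper bound, the finiteness of intervals $[\l, \nu_0] \subset \Lambda^+$ in the integer-coordinate description, and the meet-closure of $U$—and the delicate point is verifying that each coordinate is confined to finitely many integer values inside such an interval.
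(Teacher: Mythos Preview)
Your proof is correct and follows essentially the same strategy as the paper: the coordinatewise minimum serves as the meet (your explicit Cartan-matrix computation is what the paper outsources to \cite{stem}), and the join is obtained as the minimum of the nonempty, meet-closed set $U$ of common dominant upper bounds. The only tactical difference is in extracting that minimum---the paper picks, for each coordinate $i$, an element of $U$ minimizing that coordinate and then meets these $n{+}1$ elements, whereas you fix one $\nu_0\in U$ and use finiteness of $U\cap[\l,\nu_0]$; both arguments rest on the same fact that the coordinates in $U$ are integers bounded below.
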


\begin{proof}
Let $\l$ and $\m$ belong to a connected component of ($\Lambda^+,\leqslant$) say, $(\Lambda^+_a,\leqslant)$. Therefore, we have $\l-\m=\sum\limits_{i=0}^n k_i\a_i$ for some $k_i\in\bz$. Recall that $\d=\sum\limits_{i=0}^n a_i\a_i$ for some $a_i\in\bz_{>0}$. Now choose $n=max[\{k_i:0\leq i\leq n\}\cup\{0\}]$. Then we obtain $\l,\m\geq \l-n\d.$ So each pair in $(\Lambda^+_a,\leqslant)$ has a lower bound. Similarly, we get $\l,\m\leq \m+n\d.$ So each pair in $(\Lambda^+_a,\leqslant)$ also has a upper bound.

\medskip 

As $\l$ and $\m$ belong to a connected component of ($\Lambda^+,\leqslant$), recall that by Observation \ref{level} we have $\l(c)=\m(c)= m$. Suppose $\l=m\omega_0+\sum\limits_{i=0}^n\l_i\a_i$ and $\m=m\omega_0+\sum\limits_{i=0}^n\m_i\a_i $. Now let us define $$\l\wedge\m=m\omega_0+\sum\limits_{i=0}^n min\{\l_i,\m_i\}\a_i.$$
By arguments similar to Lemma 1.2 and subsection 1.1 in \cite{stem}, we obtain $\l \wedge \m \in (\Lambda^+_a,\leqslant)$. Clearly $\l \wedge \m$ is the greatest lower bound of $\l$ and $\m$. Now to prove$(\Lambda^+_a,\leqslant)$ is a lattice, all we need to show is that for any two elements $\l,\m$ in $(\Lambda^+_a,\leqslant)$, their lowest upper bound exists. Let $U(\l,\m)$ be the set of all upper bounds of $\l$ and $\m$. Observe that $U(\l,\m)$ is non-empty. Let $\eta=m\omega_0+\sum\limits_{i=0}^n\eta_i\a_i$ be an arbitrary element in $U(\l,\m)$. Note that $\eta_i$ is bounded below by both $\l_i$ and $\m_i$. Choose $\nu_i\in  U(\l,\m)$ such that it minimizes the coefficient of $\a_i$ in $U(\l,\m)$ for each $0\leq i \leq n$. Now clearly $\nu_0\wedge \nu_1\wedge\cdots\wedge\nu_n$ is the lowest upper bound of $\l$ and $\m$. This finishes the proof.
\end{proof}

\section{Covering relation}
From here on we will deal only with dominant weights of positive level i.e. we will consider only those dominant weights $\mu$ such that there exists some $i$ for which $\mu(\a_i^{\vee})\neq 0$.  We recall here an important Lemma and a Proposition from \cite{stem}, which are crucial for the study of the covering relations on ($\Lambda^+,\leqslant$). For the sake of completeness, we will include the proof of Lemma here.

\medskip

Let $K$ be a subdiagram of $S(A)$. For $\b=\sum_{i=0}^n k_i\a_i$, we define $Supp (\b)=\{{i:k_i\neq 0}\}$ and $\b|_{K}:=\sum_{i\in K}k_i\a_i$. If $K$ is a proper connected subdiagram then by [\cite{Kac}, Proposition 4.7], $K$ must be of finite type. We denote the short highest root corresponding to the subdiagram $K$ by $\a_K$.
\begin{prop}[Proposition 2.1, \cite{stem}]\label{mainprop}
Let $\Psi$ be an irreducible finite root system and $\lambda$ be a dominant weight. If $\lambda(\neq 0)$ is a non-negative integral sum of simple roots then $\lambda-\a_{\Psi}$ is also a non-negative integral sum of simple roots where $\a_{\Psi}$ is the short highest root of $\Psi$. 
\end{prop}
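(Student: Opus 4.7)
The plan is to reduce to showing $c_i \geq m_i$ for every $i$, where $\l = \sum_{i=1}^n c_i \a_i$ (with $c_i \in \bz_{\geq 0}$) and $\a_\Psi = \sum_{i=1}^n m_i \a_i$ (with $m_i \geq 1$, since any highest root in an irreducible root system has full support). I would proceed in two stages: first a support argument pinning down the shape of $\l$, and then an inductive comparison of $\l$ with $\a_\Psi$.

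\emph{Stage 1 (full support).} First I would show that $c_i \geq 1$ for every $i$. Suppose for contradiction that $c_j = 0$ for some $j$. Dominance at $\a_j$ gives
\[
0 \leq \l(\a_j^\vee) = \sum_{i \neq j} c_i a_{ji},
\]
and since $a_{ji} \leq 0$ for $i \neq j$ while $c_i \geq 0$, each summand is non-positive, so in fact each summand vanishes. Thus every neighbor $i$ of $j$ in the Dynkin diagram satisfies $c_i = 0$; iterating, the connectedness of the Dynkin diagram of $\Psi$ forces $\l = 0$, contradicting the hypothesis.

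\emph{Stage 2 (descent by a simple root).} Next I would induct on the height $\sum_i c_i$. Whenever an index $j$ exists with $\l(\a_j^\vee) \geq 2$, the weight $\m := \l - \a_j$ is again dominant (trivially at $i \neq j$, and by the choice of $j$ at $i = j$), still lies in $\bz_{\geq 0}\Phi^+$ because $c_j \geq 1$ from Stage 1, and is nonzero (else $\l = \a_j$, which can only happen if $\Psi = A_1$, in which case $\l = \a_\Psi$ directly). The induction hypothesis then yields $\m \geq \a_\Psi$, and hence $\l = \m + \a_j \geq \a_\Psi$.

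\emph{Base case and main obstacle.} The delicate case is when $\l(\a_i^\vee) \in \{0,1\}$ for every $i$, so that no simple root can be subtracted while preserving dominance; here $\l$ is a minimal non-zero dominant element of $\bz_{\geq 0}\Phi^+$, and one must show $\l = \a_\Psi$ directly. I would argue this by the extremal characterization of $\a_\Psi$: the short roots of $\Psi$ form a single Weyl orbit whose unique dominant representative is $\a_\Psi$, and $\a_\Psi$ is the highest weight of the ``small'' quasi-minuscule representation whose non-zero weights are exactly the short roots. These facts together identify $\a_\Psi$ as the unique minimal non-zero dominant element of the root lattice. Producing this minimality argument uniformly, rather than by a type-by-type inspection through $A$--$G$, is the principal technical difficulty of the proof; the support lemma and the inductive descent in Stages 1--2 are otherwise routine.
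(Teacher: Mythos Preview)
The paper does not supply its own proof of this proposition; it is quoted from \cite{stem} and used as a black box. So there is nothing in the paper to compare against, and I will instead assess your argument directly.

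Your Stages~1 and~2 are correct and standard. The gap is in the base case. You claim that when $\l(\a_i^\vee)\in\{0,1\}$ for every $i$, the weight $\l$ is a \emph{minimal} nonzero dominant element of $\bz_{\ge 0}\Phi^+$, and hence must equal $\a_\Psi$. This inference is false. In $G_2$ the highest long root $\theta=3\a_1+2\a_2$ (with $\a_1$ short) satisfies $\theta(\a_1^\vee)=0$ and $\theta(\a_2^\vee)=1$, and neither $\theta-\a_1$ nor $\theta-\a_2$ is dominant, so your descent halts at $\theta\neq\a_\Psi=2\a_1+\a_2$. More dramatically, in $E_8$ the weight lattice and root lattice coincide, so every fundamental weight $\omega_i$ lies in $\bz_{\ge 0}\Phi^+$ and has $\omega_i(\a_j^\vee)=\delta_{ij}\in\{0,1\}$; your base case therefore contains all eight fundamental weights, only one of which is $\a_\Psi$.

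Consequently the residual task --- showing $\l\ge\a_\Psi$ for every nonzero dominant $\l\in\bz_{\ge 0}\Phi^+$ with all $\l(\a_i^\vee)\le 1$ --- is not visibly easier than the original proposition; the ``reduction'' has not reduced anything. You correctly identify this as the principal difficulty, but the sketch offers no mechanism to resolve it. A working proof needs a different tool to compare coefficients of $\l$ and $\a_\Psi$ directly, for instance via the positivity of the entries of the inverse Cartan matrix, or via the weight structure of the quasi\nobreakdash-minuscule representation $L(\a_\Psi)$, rather than descent along simple roots alone.
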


\begin{lem}[Lemma 2.5, \cite{stem}]\label{main}
Suppose $\mu<\mu +\b$ in ($\Lambda^+,\leqslant$), $I=\text{Supp }\b$, $J=\{i\in I : \mu(\a_i^{\vee})=0\}$ and $K\subset I$ be a proper connected subdiagram of $S(A)$.  
\begin{enumerate}
\item[(1)] If $\b|_{K}(\a_i^{\vee})\geq 0$ for all $i\in K-J$, then $\beta\geq \a_K$.
\item[(2)] If in addition $(\mu+\a_K)(\a_i^{\vee})\geq 0$ for all $i\in I-K$, then $\mu+\a_K$ is dominant.
\end{enumerate}
\end{lem}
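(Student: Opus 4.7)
The plan is to reduce everything to the finite root system $\Psi_K$ attached to the subdiagram $K$: since $K$ is a proper connected subdiagram of the affine diagram $S(A)$, by [\cite{Kac}, Proposition 4.7] $K$ is of finite type, and so Proposition \ref{mainprop} applies to $\Psi_K$ essentially verbatim as in Stembridge's original argument. The only adaptation needed for the affine setting is verifying that the restriction $\beta|_K$ still behaves as a dominant element of $\Psi_K$ despite the weakened hypothesis (namely $\beta|_K$ is assumed non-negative on coroots only outside $J$, not on all of $K$).

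For Part (1), I would show that $\beta|_K$ is a non-zero dominant weight in $\Psi_K$ and then invoke Proposition \ref{mainprop} to obtain $\beta|_K \geq \alpha_K$ inside $\Psi_K$; this gives $\beta = \beta|_K + \beta|_{I \setminus K} \geq \alpha_K$ in the full root lattice of $S(A)$. Non-vanishing of $\beta|_K$ is immediate since $K \subset I = \mathrm{Supp}(\beta)$. For $i \in K \setminus J$, the inequality $\beta|_K(\a_i^\vee) \geq 0$ is exactly the standing hypothesis. The genuinely new step is $i \in K \cap J$: there $\mu(\a_i^\vee) = 0$ together with dominance of $\mu + \beta$ forces $\beta(\a_i^\vee) \geq 0$, while non-positivity of off-diagonal Cartan entries gives $\beta|_{I \setminus K}(\a_i^\vee) = \sum_{j \in I \setminus K} k_j a_{ij} \leq 0$. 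Subtracting yields $\beta|_K(\a_i^\vee) = \beta(\a_i^\vee) - \beta|_{I \setminus K}(\a_i^\vee) \geq 0$, as required.

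For Part (2), the task is to verify $(\mu + \a_K)(\a_i^\vee) \geq 0$ for every vertex $i$ of $S(A)$, and I would split into three cases. For $i \in K$, the short highest root $\a_K$ is dominant in $\Psi_K$ (if it were not, a simple reflection inside $\Psi_K$ would produce a strictly larger short root), so $\a_K(\a_i^\vee) \geq 0$, and adding $\mu(\a_i^\vee) \geq 0$ closes this case. For $i \in I \setminus K$, the extra hypothesis is exactly what is needed. For $i \notin I$, I would write $\mu + \a_K = (\mu + \beta) - (\beta - \a_K)$; Part (1) guarantees $\beta - \a_K \in \bz_{\geq 0}\Phi^+$ with support contained in $I$, so $(\beta - \a_K)(\a_i^\vee) \leq 0$ for $i \notin I$ by non-positivity of off-diagonal $a_{ij}$, and combining with dominance of $\mu + \beta$ yields the inequality. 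The only genuine obstacle in the whole argument is the $i \in K \cap J$ case of Part (1); the rest is bookkeeping with supports and the signs of Cartan entries.
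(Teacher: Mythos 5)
Your proposal is correct and follows essentially the same route as the paper's own proof: both handle $i\in K\cap J$ by writing $\b|_K(\a_i^{\vee})=\b(\a_i^{\vee})-(\b-\b|_K)(\a_i^{\vee})$ and using dominance of $\mu+\b$ together with non-positivity of off-diagonal Cartan entries, then apply Proposition \ref{mainprop} to $\Phi_K$, and for part (2) use the same decomposition $\mu+\a_K=(\mu+\b)-(\b-\a_K)$ for $i\notin I$. Your added remarks (non-vanishing of $\b|_K$, dominance of $\a_K$ in $\Phi_K$) are correct details the paper leaves implicit.
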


\begin{proof}
 For $i\in J$, $\b(\a_i^{\vee})=(\mu+\b)(\a_i^{\vee})\geq 0$ as $\mu+\b\in\Lambda^+$. So if $i\in K\cap J$, then $$\b|_{K}(\a_i^{\vee})=\b(\a_i^{\vee})-(\b-\b|_{K})(\a_i^{\vee})\geq \b(\a_i^{\vee})\geq 0,$$
 since $i\notin Supp(\b-\b|_{K})$. Combining this with the hypothesis $(1)$, we get $\b|_{K}-\a_K=\sum\limits_{j\in K}m_j\a_j$ for some $m_j\in\bz_{\geq 0}$ by applying Proposition \ref{mainprop} on the irreducible finite root system $\Phi_K$ corresponding to the subdiagram $K$. Hence we get $\b\geq \b|_{K}\geq \a_K$. This proves (1).
 
\medskip
 
For $i\notin I$, $$(\mu+\a_K)(\a_i^{\vee})=[(\mu+\b)-(\b-\a_K)](\a_i^{\vee})\geq 0,$$ since $\mu+\b$ is dominant and $i\notin Supp(\b-\a_{K})$.
For $i\in I\cap K$, $$(\mu+\a_K)(\a_i^{\vee})\geq 0,$$ as $\mu\in\Lambda^+$ and $\a_K$ dominant with respect to $\Phi_K$. This combined with the stated hypothesis proves that $\mu+\a_K\in\Lambda_+$.
\end{proof}

\medskip

Here we recall a general version of Cauchy-Schwarz inequality for a real vector space with a positive semi-definite symmetric bi-linear form. We will use it to establish some basic facts about affine root systems.
\begin{prop}\label{cauchy}
Let V be a real vector space with a positive semi-definite symmetric bi-linear form ( , ). Then for $u,v\in V$, $(u,v)^2\leq |u|^2|v|^2$. If the equality holds then either $|v|^2=0$ or $|u-\frac{(u,v)}{|v|^2}v|=0.$
\end{prop}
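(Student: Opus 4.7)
The plan is to imitate the classical Cauchy--Schwarz argument, using only that $(\,,\,)$ is a positive semi-definite symmetric bilinear form. The whole proof hinges on examining the one-parameter family $p(t)=(u+tv,u+tv)$ for $t\in\br$.

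First I would expand
$$p(t)=|u|^2+2t(u,v)+t^2|v|^2,$$
which, by positive semi-definiteness, satisfies $p(t)\geq 0$ for every real $t$. I then split into two cases according to whether $|v|^2$ vanishes.

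If $|v|^2>0$, then $p$ is a genuine quadratic in $t$ that is nowhere negative, so its discriminant is non-positive:
$$4(u,v)^2-4|u|^2|v|^2\leq 0,$$
which gives $(u,v)^2\leq|u|^2|v|^2$. Equality in the discriminant forces $p$ to have a (double) real root at $t_0=-(u,v)/|v|^2$, and $p(t_0)=0$ translates exactly to $\bigl|u-\tfrac{(u,v)}{|v|^2}v\bigr|=0$, which is the stated equality condition.

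If $|v|^2=0$, the expression $p(t)=|u|^2+2t(u,v)$ is affine in $t$ and must remain $\geq 0$ for all $t\in\br$; this is possible only if $(u,v)=0$. Hence both sides of the desired inequality are zero, so it holds with equality, and the equality clause is satisfied by its first alternative $|v|^2=0$.

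There is no real obstacle here: the only subtlety worth being careful about is that the standard proof secretly divides by $|v|^2$, so one must separate out the degenerate case $|v|^2=0$ and verify that positive semi-definiteness (rather than positive definiteness) is strong enough to force $(u,v)=0$ in that situation, which is exactly what the affine argument above accomplishes.
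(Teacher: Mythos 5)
Your proof is correct. The paper states Proposition \ref{cauchy} without proof, treating it as a standard fact, and your argument is the classical one: positivity of the quadratic $p(t)=|u+tv|^2$ forces the discriminant condition when $|v|^2>0$, and you correctly isolate the degenerate case $|v|^2=0$, where the affine function $|u|^2+2t(u,v)$ being nonnegative for all $t$ forces $(u,v)=0$ so that both sides vanish and the first alternative of the equality clause holds. Nothing is missing.
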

\begin{lem}\label{rem}
Consider $\a$ and $\b$ two real roots of $\lie{g}(A)$. Then  $\a(\b^{\vee})\b(\a^{\vee})\leq 4$.
\end{lem}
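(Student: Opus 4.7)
The plan is to reduce the inequality to the Cauchy--Schwarz inequality stated just above in Proposition \ref{cauchy}, using the standard identification between coroots of real roots and rescaled roots via the normalized invariant form.

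First, I would recall (from \cite{Kac}, Chapter 5) that a root of $\lie{g}(A)$ is real if and only if it has strictly positive norm with respect to the normalized invariant form $(\,,\,)$; for such a root $\a$ one has the identity
$$\a^\vee = \frac{2\a}{(\a,\a)}$$
under the canonical identification of $\lie{h}$ with $\lie{h}^*$ induced by $(\,,\,)$. Applying this to both $\a$ and $\b$, and using $(\a,\a)>0$, $(\b,\b)>0$, I would compute
$$\a(\b^\vee)\,\b(\a^\vee) \;=\; \frac{2(\a,\b)}{(\b,\b)}\cdot\frac{2(\b,\a)}{(\a,\a)} \;=\; \frac{4(\a,\b)^2}{(\a,\a)(\b,\b)}.$$

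Next, since the normalized invariant form restricted to the root lattice is positive semi-definite by \cite{Kac}, Proposition 4.7, I would invoke Proposition \ref{cauchy} with $u=\a$, $v=\b$ to conclude $(\a,\b)^2 \leq (\a,\a)(\b,\b)$. Plugging this into the expression above yields $\a(\b^\vee)\b(\a^\vee)\leq 4$.

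The only conceptual point to be careful about is that the bilinear form is only positive semi-definite, not positive definite, so one cannot invoke the classical Cauchy--Schwarz inequality directly; this is exactly the reason the author has stated the generalized version (Proposition \ref{cauchy}). The denominators $(\a,\a)$ and $(\b,\b)$ in the computation are nonzero precisely because $\a,\b$ are assumed real, so no degeneracy issue arises. I do not expect any significant obstacle; the entire argument is essentially a one-line calculation once the formula $\a^\vee = 2\a/(\a,\a)$ is in place.
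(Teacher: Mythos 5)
Your argument is correct and is essentially identical to the paper's proof: both use the identity $\a(\b^{\vee})\b(\a^{\vee})=\frac{2(\a,\b)}{|\b|^2}\cdot\frac{2(\a,\b)}{|\a|^2}$ and then apply the semi-definite Cauchy--Schwarz inequality of Proposition \ref{cauchy}. Your additional remark that $|\a|^2,|\b|^2>0$ because the roots are real is a point the paper leaves implicit, but there is no substantive difference.
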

\begin{proof}
By Proposition \ref{cauchy}, $(\a,\b)^2\leq |\a|^2|\b|^2$. So, $\a(\b^{\vee})\b(\a^{\vee})=\frac{2(\a,\b)}{|\b|^2}\frac{2(\a,\b)}{|\a|^2}\leq 4$.
\end{proof}
\begin{lem}\label{1stgen}
Let $\a\in\Phi$ be a real root and $\alpha_i$ be a simple root such that $\a(\a_i^{\vee})\a_i(\a^{\vee})= 4$ and $\a_i\notin Supp(\a)$. Then $\a-\frac{(\a,\a_i)}{|\a_i|^2}\a_i=-\frac{(\a,\a_i)}{a_i|\a_i|^2}\delta$.
\end{lem}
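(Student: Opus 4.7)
The plan is to exploit the equality case of Proposition \ref{cauchy}. Since $\a(\a_i^{\vee})\a_i(\a^{\vee}) = 4$, a direct computation (as in the proof of Lemma \ref{rem}) gives
\[
4 = \a(\a_i^{\vee})\a_i(\a^{\vee}) = \frac{4(\a,\a_i)^2}{|\a|^2 |\a_i|^2},
\]
so $(\a,\a_i)^2 = |\a|^2|\a_i|^2$, i.e.\ equality holds in Cauchy--Schwarz. Because $\a_i$ is a real simple root, $|\a_i|^2 \neq 0$, so Proposition \ref{cauchy} forces
\[
\gamma := \a - \frac{(\a,\a_i)}{|\a_i|^2}\a_i
\]
to be isotropic: $|\gamma|^2 = 0$.

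The next step is to identify the isotropic vectors in the root lattice. Since the normalized invariant form is positive semi-definite on the real span of the roots (Proposition 4.7 of \cite{Kac}) and its radical there is one-dimensional, spanned by $\delta$, every isotropic element of the root lattice (over $\mathbb{Q}$) is a rational multiple of $\delta$. Hence $\gamma = c\,\delta$ for some $c \in \mathbb{Q}$.

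Finally I would pin down $c$ by comparing the coefficient of $\a_i$ on both sides. Since $\a_i \notin \mathrm{Supp}(\a)$ by hypothesis, the coefficient of $\a_i$ in $\gamma$ is simply $-\frac{(\a,\a_i)}{|\a_i|^2}$. On the other side, $\delta = \sum_{j=0}^n a_j \a_j$, so the coefficient of $\a_i$ in $c\,\delta$ is $c\,a_i$. Equating gives $c = -\frac{(\a,\a_i)}{a_i |\a_i|^2}$, which is the desired identity.

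I do not anticipate a serious obstacle here: the only substantive ingredient is the equality case of the semi-definite Cauchy--Schwarz inequality already recorded in Proposition \ref{cauchy}, together with the fact that $\mathbb{Q}\delta$ is the full radical of the form on the root lattice in the affine case. The hypothesis $\a_i \notin \mathrm{Supp}(\a)$ is used precisely to read off the $\a_i$-coefficient cleanly; without it, one would have to subtract the contribution coming from $\a$ itself.
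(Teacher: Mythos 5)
Your proposal is correct and follows essentially the same route as the paper: equality in the semi-definite Cauchy--Schwarz inequality of Proposition \ref{cauchy} forces $\a-\frac{(\a,\a_i)}{|\a_i|^2}\a_i$ into the radical $\bc\delta$, and the hypothesis $\a_i\notin \mathrm{Supp}(\a)$ lets you read off the scalar from the $\a_i$-coefficient. Your explicit remark that an isotropic vector of a positive semi-definite form lies in the radical, and that $|\a_i|^2\neq 0$ rules out the degenerate branch of the equality case, is a small clarification the paper leaves implicit.
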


\begin{proof}
Since the bi-linear form ( , ) is positive semi-definite [Proposition 4.7, \cite{Kac}], by applying Proposition \ref{cauchy}, $$(\a,\a_i)^2\leq |\a|^2 |\a_i|^2.$$

By hypothesis $\a(\a_i^{\vee})\a_i(\a^{\vee})= 4$. therefore we get $\frac{2(\a,\a_i)}{|\a|^2}\frac{2(\a,\a_i)}{|\a_i|^2}=4$. Hence the above equality holds. So,
$$|\a-\frac{(\a,\a_i)}{|\a_i|^2}\a_i|^2=0.$$
As the radical of the bi-linear form ( , ) is 1-dimensional and it is generated by $\delta$, we get $\a-\frac{(\a,\a_i)}{|\a_i|^2}\a_i=n\delta$ for some $n$. Since $\a_i\notin Supp(\a)$, $n=-\frac{(\a,\a_i)}{a_i|\a_i|^2}$ where $a_i$ is the coefficient of $\a_i$ in the expansion of $\delta$. Hence the result.
\end{proof}

\begin{cor}
Let $\a$ and $\a_i$ be as in Lemma \ref{1stgen}. If $\frac{(\a,\a_i)}{|\a_i|^2}=-1$, then we have $\a+\a_i=\d$.
\end{cor}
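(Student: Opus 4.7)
The plan is to derive the corollary by direct substitution into the formula from Lemma \ref{1stgen}, and then to extract from the resulting identity the additional arithmetic information that $a_i = 1$.

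First, I would apply the conclusion of Lemma \ref{1stgen} verbatim: since the hypotheses are inherited, we have
$$\a - \frac{(\a,\a_i)}{|\a_i|^2}\a_i = -\frac{(\a,\a_i)}{a_i|\a_i|^2}\delta.$$
Substituting the assumption $\frac{(\a,\a_i)}{|\a_i|^2} = -1$ into both sides simultaneously collapses the coefficients: the left-hand side becomes $\a + \a_i$, and the right-hand side becomes $\frac{1}{a_i}\d$. So after one line of algebra one already has the identity $\a + \a_i = \frac{1}{a_i}\d$.

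The remaining task, and the real content of the corollary, is to upgrade this to $\a + \a_i = \d$, i.e.\ to show that $a_i = 1$. I would argue this by looking at coefficients in the basis of simple roots. The left-hand side $\a + \a_i$ is a $\bz$-linear combination of simple roots, since $\a$ is a real root (in fact, because $\a$ is positive and $\a_i \notin Supp(\a)$, a $\bz_{\geq 0}$-linear combination with coefficient $1$ at $\a_i$). Comparing with $\frac{1}{a_i}\d = \sum_j \frac{a_j}{a_i}\a_j$, one obtains $\frac{a_j}{a_i} \in \bz_{\geq 0}$ for every $j$; in other words, $a_i$ divides every label $a_j$ appearing in the expansion of $\d$.

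The anticipated obstacle is then not analytic but diagrammatic: one must invoke the standard fact, read off from Kac's Table Aff, that for every affine generalized Cartan matrix at least one of the labels $a_j$ equals $1$ (equivalently, $\gcd(a_0,\dots,a_n)=1$). Combined with $a_i \mid a_j$ for all $j$, this immediately forces $a_i = 1$, and the identity $\a + \a_i = \d$ follows. Apart from this case-free appeal to the classification of affine diagrams, the proof is a one-step specialization of Lemma \ref{1stgen}.
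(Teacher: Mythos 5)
Your proposal is correct and follows the route the paper intends: the corollary is stated without proof as an immediate specialization of Lemma \ref{1stgen}, and substituting $\frac{(\a,\a_i)}{|\a_i|^2}=-1$ does give $\a+\a_i=\frac{1}{a_i}\d$ exactly as you say. Your extra step pinning down $a_i=1$ (integrality of the coefficients of $\a+\a_i$ forces $a_i\mid a_j$ for all $j$, and the marks of an affine diagram have gcd $1$) is the one genuinely non-trivial point, which the paper leaves implicit; your argument for it is valid, though note the parenthetical claim that $\a$ is positive is neither given in the lemma nor needed, since integrality of the coefficients suffices.
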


\medskip

\begin{lem}\label{affdel}
Let $\b\in\bz\Phi$ be an element of the affine root lattice corresponding to the affine GCM $A$ with the property that $\b(\a_i^{\vee})\geq 0$ for all $0\leq i \leq n$. Then $\b=n\delta$ for some $n\in\bz$.
\end{lem}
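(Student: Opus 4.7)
The plan is to exploit the radical of the invariant form: since the bilinear form on the root lattice $Q$ is positive semi-definite with one-dimensional radical generated by $\delta$ [Proposition 4.7, \cite{Kac}], and $\beta \in \mathbb{Z}\Phi \subset Q$, it suffices to prove $(\beta, \alpha_i) = 0$ for every simple root $\alpha_i$; this forces $\beta$ to lie in the radical, and the primitivity of $\delta$ in $Q$ (the numerical labels $a_i$ have $\gcd = 1$) then upgrades $\beta \in \mathbb{R}\delta$ to $\beta \in \mathbb{Z}\delta$.

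The key computation proceeds as follows. Since $\delta$ lies in the radical of the form, one has $(\delta, \beta) = 0$. Expanding $\delta = \sum_{i=0}^n a_i \alpha_i$ and using the standard identity $(\alpha_i, \beta) = \tfrac{|\alpha_i|^2}{2}\, \beta(\alpha_i^\vee)$, this equation becomes
\[
0 = (\delta, \beta) = \sum_{i=0}^n a_i \cdot \frac{|\alpha_i|^2}{2} \cdot \beta(\alpha_i^\vee).
\]
Every factor on the right-hand side is non-negative: $a_i > 0$ by definition, $|\alpha_i|^2 > 0$ because each simple root is a real root, and $\beta(\alpha_i^\vee) \geq 0$ by the hypothesis. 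A sum of non-negative real numbers vanishing forces each term to vanish, so $\beta(\alpha_i^\vee) = 0$ for all $0 \leq i \leq n$.

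This translates to $(\beta, \alpha_i) = 0$ for all $i$, i.e., $\beta$ is orthogonal to every simple root with respect to $(\,,\,)$, hence $\beta$ lies in the radical of the form restricted to $Q_{\mathbb{R}}$. Since this radical is $\mathbb{R}\delta$, we obtain $\beta = r\delta$ for some $r \in \mathbb{R}$. Writing $\beta = \sum k_i \alpha_i$ with $k_i \in \mathbb{Z}$ and equating with $r \sum a_i \alpha_i$ (using linear independence of the $\alpha_i$ modulo a check that this holds despite the one-dimensional radical --- linear independence of simple roots is standard in the Kac--Moody setup), we deduce $k_i = r a_i$ for every $i$, and since $\gcd(a_0, \dots, a_n) = 1$, $r$ must be an integer.

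The only potential obstacle is the last primitivity step, but this is immediate from the standard choice of the labels $a_i$ in Table Aff of \cite{Kac} as the minimal positive integers spanning $\ker A$; otherwise the argument is a direct consequence of the radical identity $(\delta, \beta) = 0$ combined with the sign constraints.
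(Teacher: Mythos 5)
Your proof is correct, but it takes a different route from the paper's. The paper writes $\beta=\sum_i m_i\alpha_i$, observes that the hypothesis says exactly $Am\geq 0$ componentwise, and then quotes Theorem 4.3 of \cite{Kac} (for an affine GCM, $Am\geq 0$ forces $Am=0$) to conclude that $m$ lies in the one-dimensional kernel of $A$, which is spanned over $\bz$ by the vector of labels of $\delta$. You instead reprove the needed implication from the positive semi-definiteness of the normalized invariant form, pairing the hypothesis against $\delta$ via $0=(\delta,\beta)=\sum_i a_i\tfrac{|\alpha_i|^2}{2}\,\beta(\alpha_i^\vee)$ and using that every factor is non-negative to force $\beta(\alpha_i^\vee)=0$ for all $i$. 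This is precisely the standard ``positive left null-vector'' proof of the relevant half of Theorem 4.3, so the two arguments are two presentations of the same underlying fact: yours is self-contained modulo Proposition 4.7 of \cite{Kac} (the radical of the form on the root lattice is one-dimensional and generated by $\delta$), a fact the paper already invokes elsewhere (Lemma \ref{1stgen}), while the paper's version is shorter because it cites the linear-algebraic characterization directly. Your final primitivity step ($\gcd(a_0,\dots,a_n)=1$; in fact some $a_i$ equals $1$ in every affine type) is the same integrality observation that the paper leaves implicit, and your remark that linear independence of the simple roots survives the degeneracy of the form is correct, since that independence is built into the realization of the GCM rather than derived from the form.
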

\begin{proof}
Let $\b=\sum\limits_{i=0}^nm_i\a_i$ for some $m_i\in\bz$. Since $\b(\a_i^{\vee})\geq 0$ for all $0\leq i \leq n$, we have $$Am\geq 0$$ where $m=[m_0,m_1,\cdots,m_n]^T$. By [Theorem 4.3, \cite{Kac}], $Am=0$ and $\b=n\delta$ for some $n\in\bz$.
\end{proof} 

\medskip

Now we state our first main theorem which describes the covering relations in the poset ($\Lambda^+,\leqslant$) for any affine GCM $A$.

\begin{thm}\label{1sttheo}
If $\lambda$ covers $\mu$ in ($\Lambda^+_m,\leqslant$) for some $m\in \mathbb{Z}_{>0}$. Then one of the following is true:
\begin{enumerate}
\item[(1)] $\lambda-\mu=\a_K$ for some proper connected subdiagram K of $S(A)$ 
\item[(2)] $\lambda-\mu=\delta$ where $\delta$ is the canonical imaginary root of $\lie{g}(A)$.
\item[(3)] $A=D_4^{(3)}$ or $A=G_2^{(1)}$, $\mu$ is not a fundamental weight and $\lambda-\mu=\a_1+\a_2$, where $\a_1$ and $\a_2$ are the simple roots generating a root system of type $G_2$.
\item[(4)] $A=G_2^{(1)}$,  $\mu$ is a multiple of the fundamental weight corresponding to the unique short simple root and $\lambda-\mu=\a_1+\a_2+\a_3$, where $\a_1$, $\a_2$ and $\a_3$ are the simple roots of $A$.
\end{enumerate} 
\end{thm}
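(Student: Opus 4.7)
Following the blueprint of Stembridge's proof of Theorem 2.6 in \cite{stem}, I would set $\beta = \lambda - \mu$, $I = \text{Supp}(\beta)$, and $J = \{i \in I : \mu(\alpha_i^\vee) = 0\}$. The basic strategy is to exhibit a proper connected subdiagram $K$ of $S(A)$ with $K \subseteq I$ satisfying both hypotheses of Lemma \ref{main}; then $\mu + \alpha_K \in \Lambda^+$ lies strictly between $\mu$ and $\lambda$, and the covering hypothesis forces $\beta = \alpha_K$, placing us in case (1). Cases (2)--(4) should arise precisely when no such $K$ exists.

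\medskip

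The first case I would treat is $I \subsetneq S(A)$. Here any connected component $K$ of $I$ is automatically a proper connected subdiagram of $S(A)$, hence of finite type by [\cite{Kac}, Proposition 4.7]. For $i \in K$ the disjointness from $I \setminus K$ gives $\beta|_{K}(\alpha_i^\vee) = \beta(\alpha_i^\vee)$, so condition (1) of Lemma \ref{main} reduces to $\beta(\alpha_i^\vee) \geq 0$ for $i \in K \setminus J$. If this already holds I conclude case (1) after also verifying condition (2); if not, I would shrink $K$ by removing problematic vertices and iterate --- mirroring Stembridge's argument --- until both hypotheses of Lemma \ref{main} align.

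\medskip

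The more involved case is $I = S(A)$, which is essentially new in the affine setting. If $\beta(\alpha_i^\vee) \geq 0$ for every $i$, Lemma \ref{affdel} gives $\beta = n\delta$ for some $n \geq 1$. Since $\delta$ vanishes on every coroot, $\mu + \delta \in \Lambda^+$, and the covering hypothesis forces $n = 1$, giving case (2). Otherwise I would pick an $i$ with $\beta(\alpha_i^\vee) < 0$ (necessarily with $i \in I \setminus J$) and search among proper connected subdiagrams containing $i$ for one compatible with Lemma \ref{main}. A diagram-by-diagram inventory based on Table Aff of \cite{Kac} should show that in every type \emph{except} $G_2^{(1)}$ and $D_4^{(3)}$ such a $K$ can be located, returning us to case (1); in the exceptional types Lemma \ref{rem}, Lemma \ref{1stgen}, and its Corollary pin $\beta$ down to the forms $\alpha_1 + \alpha_2$ or $\alpha_1 + \alpha_2 + \alpha_3$ described in (3) and (4).

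\medskip

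The main obstacle I anticipate is the exceptional analysis in types $G_2^{(1)}$ and $D_4^{(3)}$. The source of trouble is the embedded $G_2$ subsystem, whose short highest root has coefficient $2$ on the short simple root --- this multiplicity breaks the clean reflection argument that works in other types and causes $\mu + \alpha_K$ to fail dominance in certain boundary configurations. Pinning down exactly which pairs $(\mu, \lambda)$ escape the general argument, and matching them against the dominance hypotheses in statements (3) and (4), will require careful coroot-pairing bookkeeping across the short list of three-vertex diagrams, but I expect no fundamentally new idea beyond the tools already assembled in Section 2 and the start of Section 3.
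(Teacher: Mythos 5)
Your skeleton matches the paper's at the top level (reduce everything to Lemma \ref{main}, use Lemma \ref{affdel} for the imaginary-root case, use Lemmas \ref{rem} and \ref{1stgen} to isolate exceptions), but the dichotomy you build the argument on --- ``if $\b(\a_j^{\vee})<0$ for some $j$, then outside $G_2^{(1)}$ and $D_4^{(3)}$ a proper connected $K$ satisfying Lemma \ref{main} can always be located, landing us in case (1)'' --- is false, and the configurations where it fails are where the real content of the theorem lives. Concretely, take $A=A_2^{(1)}$ and $\m=\omega_0$: no proper connected $K$ whatsoever makes $\m+\a_K$ dominant (each candidate pairs to $-1$ or worse against the coroot of the omitted vertex), yet the type is neither $G_2^{(1)}$ nor $D_4^{(3)}$. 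The same happens in $A_2^{(2)}$ when $\m$ pairs to $1$ with the short simple coroot, where $\a_K(\a_i^{\vee})=-4$ and $\m+n\a_K+\a_i$ is never dominant. In these situations your branch must be shown to be \emph{vacuous}: one has to prove that any dominant $\lambda>\m$ already satisfies $\lambda\geq\m+\d$, so the unique cover is $\d$. That step is exactly the paper's Subcases 2.2.1 and 2.2.3, and it uses Lemma \ref{1stgen} (the equality case of Cauchy--Schwarz identifying $\a_K+\a_i$, resp. $\a_K+2\a_i$, with $\d$) in \emph{every} type, not only in the two $G_2$-flavoured exceptional ones where you invoke it. Without this idea your argument simply stalls: you can neither produce the claimed $K$ nor conclude (2), (3) or (4).

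Two further points. First, your repair heuristic ``shrink $K$ by removing problematic vertices and iterate'' goes in the wrong direction: when hypothesis (2) of Lemma \ref{main} fails at a vertex $i\in I\setminus J$, the paper \emph{enlarges} the component $K$ of $J$ to the component $L$ of $J\cup\{i\}$ containing $i$, and then verifies hypothesis (1) for $L$ by a parity argument ($\gamma(\a_i^{\vee})\in2\bz$ for all $\gamma\in\bz\Phi_L$) that exploits $\m(\a_i^{\vee})=1$; no shrinking procedure produces this $L$. Second, your diagram inventory omits the twisted types $A_{2l}^{(2)}$, which require dedicated treatment ($A_4^{(2)}$ inside Subcase 2.2.1 and $A_2^{(2)}$ in Subcase 2.2.3) even though they contribute no new exceptional case to the statement. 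The paper's organization --- splitting on $J=\emptyset$ versus $J\neq\emptyset$ and then on the value $\a_K(\a_i^{\vee})\in\{-2,-3,-4\}$ for $K$ a component of $J$, rather than on $I=S(A)$ versus $I\subsetneq S(A)$ --- is what makes all of these cases surface systematically; as written, your sentence ``a diagram-by-diagram inventory should show\dots'' conceals both a false claim and the bulk of the proof.
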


\begin{proof}
This theorem is clearly true for the case $A=A_1^{(1)}$. Since $\delta(A_1^{(1)})$ is just the sum of two simple roots, $\lambda-\mu$ is either a simple root or $\lambda-\mu=\delta$. So for the proof, let us assume $A\neq A_1^{(1)}$.

\medskip

Let $\b=\lambda-\mu$, $I=\text{Supp }\b$ and $J=\{i\in I:\mu(\a_i^{\vee})=0\}$. If there exists a proper connected subdiagram $K$ in $I$ satisfying the conditions of Lemma \ref{main}, then $\mu+\a_K$ is dominant. Since $\lambda$ covers $\mu$. we get $\lambda-\mu=\a_K$. Otherwise, we claim that either $\lambda-\mu=\delta$ or it falls in the two exceptional cases i.e. (3) and (4).

\medskip

{\bf{Case 1:}} Suppose $J=\emptyset$. Let us choose $K=\{i\}$ for some $i\in I$ which is short related to $I$. The condition (1) of Lemma \ref{main} is trivial. Since $J=\emptyset$, for $j\in I\backslash \{i\}$, $\mu(\a_j^{\vee})\geq 1$ and   we get $\a_i(\a_j^{\vee})\geq -1$ as $A\neq A_1^{(1)}$. Hence $$(\mu+\a_i)(\a_j^{\vee})\geq 0.$$
Therefore, the condition (2) of Lemma \ref{main} is also satisfied. Hence we get $\lambda-\mu=\a_K$, where $K$ contains the only vertex $\a_i$.

\medskip

{\bf{Case 2:}} Suppose $J\neq\emptyset$ and let $K$ be a connected component of $J$ containing a short root relative to $J$. So $\a_K$ is short relative to $J$.  Since $m\in\mathbb{Z}_{>0}$, there exists at least on $i\in S(A)$ such that $\mu(\a_i^{\vee})\neq 0$. So, $J$ is a proper subdiagram of $S(A)$ and hence so is $K$.

\medskip

{\bf{Subcase 2.1:}} Suppose $(\mu+\a_K)(\a_i^{\vee})\geq 0$ for all $i\in I-J$. The condition (1) of lemma \ref{main} is vacuous here. For $i\in J-K$, $\a_K(\a_i^{\vee})=0$ since $K$ is a connected component of $J$ and hence $(\mu+\a_K)(\a_i^{\vee})=0$. So the condition (2) of lemma \ref{main} is also satisfied.

\medskip

{\bf{Subcase 2.2:}} Now let us assume there exist $i\in I-J$, such that $(\mu+\a_K)(\a_i^{\vee})< 0$. Since $i\notin J$, $\mu(\a_i^{\vee})\in\mathbb{Z}_{>0}$. So 
$$\a_K(\a_i^{\vee})\leq -2.$$

By Lemma \ref{rem}, $\a_K(\a_i^{\vee})$ could be -2, -3 or -4. We will now study each cases one by one. Let $L$ be the connected component of $J\cup\{i\}$ containing $\a_i$.

\medskip

{\bf{Subcase 2.2.1:}} Suppose $\a_K(\a_i^{\vee})= -2$. Lemma \ref{rem} shows that the root length of $\a_K$ can not be strictly smaller than $\a_i$. If $|\a_K|^2= |\a_i|^2$, then  $a_K$ and $\a_i$ satisfy the conditions of Lemma \ref{1stgen}. So, $\a_K+\a_i=\delta$ and hence $$\mu+\delta=\mu+\a_K+\a_i\leq \mu+\b.$$ 
This implies $\lambda-\mu=\delta$. 

\medskip

 Suppose that the root length of $\a_K$ is strictly bigger than $\a_i$. This means that the root length of $\a_i$ is strictly smaller than that of the short root of $J$.  Then $\a_i(\a_K^{\vee})$ must be $-1$. Hence $|\a_K|^2=2|\a_i|^2$. If all roots in $J$ have same length, then $\a_i$ is the only simple root in $L$ which is shorter than all other roots. Hence $\a_j(\a_i^{\vee})\in 2\bz \text { \;for all\;  } j\in L$. If $J$ has roots of two different lengths, then $\Phi$ has roots of 3 different lengths. So, $\Phi$ is of type $A_{2l}^{(2)}$ and $\a_i$ is the unique short simple root. Hence, $\a_j(\a_i^{\vee})\in 2\bz \text { \;for all\;  } j$.

So in any case, $$\gamma(\a_i^{\vee})\in 2\bz \text { \;for all\;  } \gamma\in\bz\Phi_L.$$

\medskip

Since $(\mu+\a_K)(\a_i^{\vee})< 0$, $\a_K(\a_i^{\vee})= -2$ and $i\notin J$, $\mu(\a_i^{\vee})$ must be equal to 1. Therefore, $$\b|_L(\a_i^{\vee})\geq \b(\a_i^{\vee})=\lambda(\a_i^{\vee})-\mu(\a_i^{\vee})\geq -1.$$

As $\b|_L(\a_i^{\vee})$ is even, $\b|_L(\a_i^{\vee})\geq 0$. So the condition (1) of the Lemma \ref{main} is satisfied for $L$. 

\medskip 

If $L=S(A)$ then by Lemma \ref{affdel}, $\b=n\delta$ for some $n\in\bz_{\geq 0}$. Hence $$\mu+\delta\leq \mu+\b.$$ 
This implies $\lambda-\mu=\delta$. 

\medskip

Now suppose $L$ is a proper subdiagram of $S(A)$. If $L$ contains a short root relative to $S(A)$, then $\a_L$ is short and hence $\a_L(\a_j^{\vee})\geq -1$ for all $j\in I\setminus J$ . If $L$ does not contain a short simple root relative to $S(A)$, then $A= A_{2l}^{(2)}$, $K=J$ just contains the unique long simple root and $\a_i$ is the intermediate simple root connected to the long simple root. Hence $\a_L(\a_j^{\vee})\geq -1$ for all $j\in I\setminus J$  for $l\geq 3$ since the short simple root in not connected to $L$. As $L$ is a Connected component of $J\cup \{i\}$, we have $\a_L(\a_j^{\vee})= 0$ for $j\in J\setminus L$. So for $j\in I\setminus L$, $$(\mu+\a_L)(\a_j^{\vee})\geq 0 .$$
Therefore, the condition (2) of Lemma \ref{main} is also satisfied for $L$.

\medskip

Now let's look at the case $A=A_4^2$. Let $\a_1,\a_2$ and $\a_3$ be short, intermediate and long simple roots of $A$ respectively. So, we have $K=J=\{\a_3\}$, $i=2$ and $L=\{\a_2,\a_3\}$. If $1\notin I$, then  $\a_L(\a_j^{\vee})\geq -1$ for all $j\in I\setminus J$ and $(\mu+\a_L)$ is dominant just as before. Now let us assume $1\in I$. since $\a_3\in J$, $\m$ is of the form $m_1\omega_1+m_2\omega_2+m\d$. If $m_2>0$, then $\m+\a_1$ is dominant. So we have $m_2=0$. If $m_1>1$, then  $\m+\a_2+\a_3$ is dominant. So $m_1$ must be 1. A direct check shows that for $\m=\omega_1+m\d$, $\b$ must be equal to $\d$.

\medskip

{\bf{Subcase 2.2.2:}} Suppose $\a_K(\a_i^{\vee})= -3$. Then $\Phi$ must be of type $G_2^{(1)}$ or $D_4^{(3)}$ by Table Aff. in [\cite{Kac}, Page (54,55)]. First suppose $\Phi$ is of type 
$G_2^{(1)}$ and let $\a_i,\a_j,\a_k$ be the simple roots. Since $\a_K(\a_i^{\vee})= -3$, $\a_i$ must be the unique short simple root. Hence $K=J$, as $J$ is connected and $K$ is a connected component of $J$. Let $\a_j$ be the vertex connected to $\a_i$. So, $j\in K$ as $\a_K(\a_i^{\vee})\neq 0$. We claim $\mu+\a_K+\a_i$ is dominant. $$(\mu+\a_K+\a_i)(\a_i^{\vee})\geq 1-3+2=0,$$ $$(\mu+\a_K+\a_i)(\a_j^{\vee})\geq 0+1-1=0 .$$ If $k\in K$, then $\mu$ is a multiple of the fundamental weight corresponding to the unique short simple root $\a_i$ and $(\mu+\a_K+\a_i)(\a_k^{\vee})\geq 0+1+0=1$.

\medskip
 
If $k\notin K$, then $J=K=\{j\}$ and hence $\mu$ is not a fundamental weight. In this case $\a_K(\a_k^{\vee})= -1$ and $\mu(\a_k^{\vee})\geq 1$. So, $(\mu+\a_K+\a_i)(\a_k^{\vee})\geq 0$. Hence $\lambda-\mu=\a_K+\a_i$.

\medskip

Suppose $\Phi$ is of type $D_4^{(3)}$. Since $\a_K(\a_i^{\vee})= -3$, $\a_K$ is a long root with respect to $\Phi$. As $\a_K$ is short corresponding to $J$, $J$ can contain only the unique long simple root say $\a_k$. Therefore, we have $K=J$ and $\mu$ is not a fundamental weight. Let $\a_i$ be the short simple root connected to $\a_k$ and let $\a_j$ be the other short simple root. Then $$(\mu+\a_k+\a_i)(\a_i^{\vee})\geq 1-3+2=0,$$ $$(\mu+\a_k+\a_i)(\a_j^{\vee})\geq 1+0-1=0 ,$$
$$(\mu+\a_k+\a_i)(\a_k^{\vee})= 0+2-1=1.$$ 

Hence $\mu+\a_k+\a_i$ is dominant and hence $\lambda-\mu=\a_k+\a_i$.

\medskip

{\bf{Subcase 2.2.3:}} Suppose $\a_K(\a_i^{\vee})= -4$. This means $\a_K$ and $\a_i$ generate a root system of type $A_2^{(2)}$. So $\Phi$ is of type $A_{2l}^{(2)}$. 

\medskip

Since $\a_K(\a_i^{\vee})= -4$, $\a_K$ is a long root and $\a_i$ is the short simple root with respect to $\Phi$. As $\a_K$ is short corresponding to $J$, $J$ can contain only the unique long simple root say $\a_k$ and hence $\a_K=\a_k$. The fact $\a_k(\a_i^{\vee})\neq 0$ implies $\Phi$ must be of type $A_2^{(2)}$. Since $(\mu+\a_K)(\a_i^{\vee})< 0$, $i\notin J$  and $\a_K(\a_i^{\vee})= -4$, we get $\mu(\a_i^{\vee})\in \{1,2,3\}$. Therefore
$$(\mu+\a_K+\a_i)(\a_i^{\vee})=\mu(\a_i^{\vee})-4+2,$$ $$(\mu+\a_K+\a_i)(\a_K^{\vee})=0+2-1=1.$$

If $\mu(\a_i^{\vee})\geq 2$, then $\mu+\a_K+\a_i$ is dominant. Since $\mu+\a_K+\a_i\leq \mu+\b$, we have $\lambda-\mu=\a_K+\a_i$. If $\mu(\a_i^{\vee})=1$, then $\mu+n\a_K+\a_i$ is not dominant for any $n\in\bz_{\geq 0}$. But $\mu+\a_K+2\a_i$ is dominant. By Lemma \ref{1stgen} $\a_K+2\a_i=\delta$. So, $\lambda-\mu=\delta$.

\medskip




This finishes the proof.

\end{proof}

\subsection{}

Let $CR(A)$ denote the set of all roots appearing in the statement of Theorem \ref{1sttheo}. It is clear from theorem \ref{1sttheo} that $\lambda$ covers $\mu$ in ($\Lambda^+_m,\leqslant$) implies $\lambda-\mu\in CR(A)$. The following results strengthen this theorem.

\begin{defn} A vertex $i$ of the Dynkin diagram $S(A)$ is called special if $L:=S(A)\setminus \{i\}$ is connected and $\delta=\a_L+\a_i.$
\end{defn}

\medskip 

\begin{example}
Let us take $A$ to be the untwisted affine GCM of type $A_{n}^{(1)}$. The canonical imaginary root for $A$ is the sum of all simple roots i.e. $\delta=\sum_{j\in S(A)}\a_j$ Let $i$ be any vertex of the Dynkin diagram $S(A)$. Then $L:=S(A)\setminus \{i\}$ is connected and is of type $A_n$. So, we have $\a_L=\sum_{j\neq i}\a_j$, and hence  $\delta=\a_L+\a_i.$ Therefore, every vertex in  $A_{n}^{(1)}$ is a special vertex.
\end{example}

\medskip

\begin{lem}\label{spver}
A simple root corresponding to a special vertex is always a short root. 
\end{lem}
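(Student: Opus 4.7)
The plan is to exploit the fact that $\d$ lies in the radical of the normalized invariant form on the real span of the root lattice (Proposition 4.7 of \cite{Kac}). Write $\d=\a_L+\a_i$ for the special vertex $i$. Since $(\d,\a_i)=0$ and the form is bilinear, one gets $(\a_L,\a_i)=-|\a_i|^2$; similarly $(\d,\a_L)=0$ yields $(\a_L,\a_i)=-|\a_L|^2$. Combining these gives the key identity $|\a_i|^2=|\a_L|^2$.

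Next I would interpret this equality. Since $L$ is a proper connected subdiagram of $S(A)$, it is of finite type, and $\a_L$ is by definition the short highest root of the irreducible finite root system attached to $L$. Because all short roots of an irreducible finite root system lie in a single Weyl-group orbit, $\a_L$ has the same norm as any short simple root of $L$. Hence $\a_i$ has the same norm as the short simple roots of $L$.

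Finally I would promote ``short inside $L$'' to ``short inside the ambient affine system''. If $A$ is simply laced there is nothing to prove, since every real root is by convention both short and long. Otherwise, suppose for contradiction that every simple root in $L$ were long with respect to $A$; then $\a_L$, being in the Weyl-group orbit of such a long simple root (and the action of $W(L)$ preserves the affine form), would be a long root of $A$, and by the identity above $\a_i$ would be long as well. But then every simple root of $A$ is long, forcing $A$ to be simply laced --- a contradiction. Consequently $L$ must contain at least one simple root that is short in $A$, so $\a_L$ is a short root of $A$, and therefore $\a_i$ is a short simple root of $A$, as claimed.

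The main technical obstacle is precisely this final step --- bridging ``short in the finite subsystem $L$'' and ``short in the full affine system'' --- since the identity from the first paragraph only compares lengths internally. The Weyl-orbit contradiction is where the actual content of the lemma sits; the earlier steps are essentially forced by the definition of a special vertex together with $\d$ being isotropic.
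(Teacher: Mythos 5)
Your proof is correct and follows essentially the same route as the paper: both hinge on the identity $|\a_L|^2=|\a_i|^2$ obtained from $\d=\a_L+\a_i$ lying in the radical of the form, and both then locate a simple root of $S(A)$ that is short and must lie in $L$, forcing $\a_L$ (and hence $\a_i$) to be short. Your final step is phrased as a separate contradiction argument (conflating ``not short'' with ``long'' in the three-length case $A_{2l}^{(2)}$, though only ``not short'' is needed), but the substance matches the paper's proof.
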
 
\begin{proof}
Let $i$ be a special vertex in $S(A)$. Then we have $\a_L=\delta-\a_i$ and hence $|\a_L|^2=|\a_i|^2$. Suppose $\a_i$ is not a short root. Then the subdiagram $L$ must contain a short root say, $\a_j$. Then we have $|\a_L|^2=|\a_j|^2<|\a_i|^2$ which is a contradiction. Hence $\a_i$ must be a short root in $S(A).$
\end{proof}

\medskip

\begin{lem}\label{fun}
Let $\mu$ covers $\mu-\delta$ in ($\Lambda^+_m,\leqslant$). Then one of the following is true:
\begin{enumerate}
\item $\mu$ is a fundamental weight corresponding to a special vertex of $S(A)$.
\item $S(A)$ is a non triply laced Dynkin diagram containing a unique short simple root and $\mu$ is a fundamental weight corresponding to that unique short simple root.
\item $A=D_{n+1}^{(2)}$ and $\mu=\omega_0+\omega_n$.
\item $A=A_{1}^{(1)}$ and $\mu=\omega_0+\omega_1$.
\end{enumerate}
 
\begin{proof}
First notice that $\mu(\a_i^{\vee})\leq 1$ for all $i$. Otherwise, $\mu-\a_i$ would be dominant. This would contradict the fact $\mu$ covers $\mu-\delta$.

\medskip

Now let $\mu(\a_i^{\vee})=\mu(\a_j^{\vee})= 1$ for some $i\neq j$. Choose the smallest connected subdiagram K of $S(A)$ with endpoints $i$ and $j$. For  $q\notin K$, $$(\mu-\sum_{p \in K}\a_p)(\a_q^{\vee})\geq 0.$$

For $q\in K-\{i,j\}$, $\sum_{p \in K}\a_p(\a_q^{\vee})\leq 0$ as $\a_q$ has at least 2 neighbours. Therefore, $$(\mu-\sum_{p \in K}\a_p)(\a_q^{\vee})\geq\mu(\a_q^{\vee})\geq 0.$$ And $(\sum_{p \in K}\a_p)(\a_i^{\vee})\leq 1$, $(\sum_{p \in K}\a_p)(\a_j^{\vee})\leq 1$ as both $\a_i$ and $\a_j$ have 1 neighbour. So, we have $$(\mu-\sum_{p \in K}\a_p)(\a_i^{\vee})\geq 0 \text{  and  }(\mu-\sum_{p \in K}\a_p)(\a_j^{\vee})\geq 0.$$
This contradicts the fact $\mu$ covers $\mu-\delta$  unless $\sum_{p \in K}\a_p=\delta$. By Table Aff. in [Page (54,55), \cite{Kac}] , this is possible when either $A=D_{n+1}^{(2)}$ and $\mu=\omega_0+\omega_n$ or $A=A_{1}^{(1)}$ and $\mu=\omega_0+\omega_1$. Thus, except these two special cases, $\mu$ is a fundamental weight corresponding to a vertex, say $i$.

\medskip

Let $L$ be a connected component of $S(A)\setminus \{i\}$. Note that for $j\in L$ $$\delta|_L(\a_j^{\vee})=\delta(\a_j^{\vee})-(\delta-\delta|_L)(\a_j^{\vee})\geq 0.$$ Therefore, by Proposition \ref{mainprop}, $\a_L\leq \delta|_L$ and hence $\a_L\leq \delta$. As $\mu$ covers $\mu-\delta$, $\mu-\delta+\a_L$ is not dominant. For $j\notin L$ and $j\neq i$, we have
$$(\mu-\delta+\a_L)(\a_j^{\vee})=\mu(\a_j^{\vee})\geq 0,$$
since $L$ is a connected component of $S(A)\setminus \{i\}$. For $j\in L$, we have
$$(\mu-\delta+\a_L)(\a_j^{\vee})\geq 0,$$
as $\a_L$ is dominant on $L$. Hence we get $$(\mu-\delta+\a_L)(\a_i^{\vee})<0.$$
Therefore, $\a_L(\a_i^{\vee})\leq -2$. 

\medskip

If $A$ is simply laced, then by Lemma \ref{1stgen}, $\a_L+\a_i=\delta$. Hence $i$ is a special vertex.

\medskip

Now assume $A$ is non simply laced. We claim that in this case $\a_i$ must be a short root of $S(A)$. Suppose $\a_i$ is not short and L be the connected component of $S(A)\setminus \{i\}$ containing a short root. Then $\a_i(a_L^{\vee})<\a_L(a_i^{\vee})\leq -2$ contradicting the Lemma \ref{rem}.

\medskip

Suppose $\a_i$ is not the unique short simple root in $S(A)$ and let $\a_j$ be a short root different from $\a_i$. Let $L$ be the connected component of $S(A)\setminus \{\a_i\}$ containing $\a_j$. Then as before we get $\a_L(\a_i^{\vee})\leq -2$. Since $\a_L$ and $\a_i$ have same root length, $\a_i(\a_L^{\vee})\leq -2$. Therefore, $\a_L(\a_i^{\vee})=\a_i(\a_L^{\vee})= -2$. Now by Lemma \ref{1stgen}, $\a_L+\a_i=\delta$. 

\medskip

Let $\a_i$ be the unique short simple root in $S(A)$. Then by Theorem \ref{1sttheo}, $A$ can not be $G_2^{(1)}$. Hence the claim.
\end{proof}

The following theorem describes the covering relations in ($\Lambda^+_m,\leqslant$) more explicitly.

\end{lem}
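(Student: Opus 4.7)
My plan begins by showing $\mu(\a_i^\vee) \leq 1$ for every simple coroot: if some $\mu(\a_i^\vee)\geq 2$, then $\mu-\a_i$ remains dominant (since $\a_i(\a_j^\vee)\leq 0$ for $j\neq i$), and because $\d = \sum a_j \a_j$ with all $a_j \geq 1$, one has $\mu-\d<\mu-\a_i<\mu$, contradicting the covering hypothesis.

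Next I would split on how many fundamental-weight coordinates of $\mu$ equal $1$. If two distinct vertices $i,j$ both satisfy $\mu(\a_i^\vee)=\mu(\a_j^\vee)=1$, take the smallest connected subdiagram $K$ of $S(A)$ whose endpoints are $i$ and $j$ and test dominance of $\mu - \sum_{p\in K}\a_p$: interior vertices of $K$ have at least two neighbours in $K$, so $\sum_p \a_p$ contributes non-positively at their coroots, while at the endpoints $i,j$ we lose exactly $1$. Unless $\sum_{p\in K}\a_p = \d$, this produces a dominant weight strictly between $\mu-\d$ and $\mu$. Consulting Table Aff.\ in \cite{Kac} to find affine Dynkin diagrams in which a path-shaped subdiagram sums (as simple roots) to $\d$, the only possibilities are $A_1^{(1)}$ and $D_{n+1}^{(2)}$, yielding conclusions (4) and (3).

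Otherwise $\mu=\omega_i$ is itself a fundamental weight, and the goal is to force $i$ to be a special vertex or to match case (2). For each connected component $L$ of $S(A)\setminus\{i\}$, the restriction $\d|_L$ pairs non-negatively with each coroot in $L$, so Proposition \ref{mainprop} gives $\a_L \leq \d|_L \leq \d$, placing $\mu-\d+\a_L$ strictly between $\mu-\d$ and $\mu$. The covering hypothesis forces this element not to be dominant, and a coroot-by-coroot check pinpoints the failure at vertex $i$, yielding $\a_L(\a_i^\vee)\leq -2$. Lemma \ref{rem} then forces equality in Cauchy--Schwarz whenever $\a_L$ and $\a_i$ have the same length, and Lemma \ref{1stgen} upgrades this to $\a_L + \a_i = \d$, meaning $i$ is special. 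For non-simply laced $A$, I would first argue that $\a_i$ must be short (otherwise a component $L$ containing a short root violates Lemma \ref{rem}); if some other short simple root exists in a component $L$, the equal-length argument still produces $\a_L + \a_i = \d$; and if $\a_i$ is the unique short simple root, Theorem \ref{1sttheo} eliminates $A=G_2^{(1)}$, leaving precisely case (2).

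The delicate step I expect to fight with is the non-simply laced part of the fundamental-weight case: Lemma \ref{1stgen} only yields $\a_L + \a_i = \d$ under equal root lengths, so the triply laced types $G_2^{(1)}$ and $D_4^{(3)}$ need separate treatment. I would lean on Theorem \ref{1sttheo}, which already isolates the exceptional covering relations in those diagrams, together with direct inspection of the affine tables, to confirm that no stray fundamental weight slips outside the four stated conclusions.
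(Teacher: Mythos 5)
Your proposal follows essentially the same route as the paper's own proof: the bound $\mu(\a_i^\vee)\leq 1$, the path-subdiagram argument isolating $D_{n+1}^{(2)}$ and $A_1^{(1)}$, and then the component-by-component analysis of $S(A)\setminus\{i\}$ using Proposition \ref{mainprop}, Lemma \ref{rem}, Lemma \ref{1stgen} and Theorem \ref{1sttheo} to force $\a_L(\a_i^\vee)\leq -2$ and conclude $\a_L+\a_i=\delta$. The argument is correct and the case division (simply laced, non-simply laced with non-unique short root, unique short root) matches the paper's.
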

\begin{thm}\label{2ndthe}
If $\mu<\lambda$ in ($\Lambda^+_m,\leqslant$), $I= \text{Supp }(\l-\m)$ and $J=\{i\in I:\mu(\a_i^{\vee})=0\}$, then $\lambda$ covers $\mu$ if and only if I is a connected subdiagram of $S(A)$ and one of the following holds.
\begin{enumerate}
\item[(a)] $\lambda-\mu$ is a simple root.
\item[(b)] $I=J$ is a proper subdiagram of $S(A)$ and $\lambda-\mu=\a_I$.
\item[(c)] $I=J\cup \{i\}$, $\Phi_I$ is of type $B_l$, $\a_i$ is short, $\mu(\a_i^{\vee})=1$ and $\lambda-\mu=\a_I$.
\item[(d)] $I=J\cup \{i\}$, $\Phi_I$ is of type $G_2$, $\a_i$ is short, $\mu(\a_i^{\vee})=\{1,2\}$ and $\lambda-\mu=\sum_{i\in I}\a_i$.
\item[(e)] $I=J\cup \{i\}=S(A)$, $\Phi_I$ is of type $G_2^{(1)}$, $\a_i$ is short, $\mu(\a_i^{\vee})\in\{1,2\}$ and $\lambda-\mu=\sum_{i\in I}\a_i$.
\item[(f)] $\lambda-\mu=\delta$ and $\lambda$, $\mu$ both are fundamental weights corresponding to a special vertex. 
\item[(g)] $\lambda-\mu=\delta$. $S(A$) is a Dynkin diagram which is not triply laced and contains a unique short simple root. $\lambda$, $\mu$ both are fundamental weights corresponding to that unique short simple root. 
\item[(h)] $A=D_{n+1}^{(2)}$, $\lambda=\omega_0+\omega_n$ and $\lambda-\mu=\delta$. 
\item[(i)] $A=A_{1}^{(1)}$, $\lambda=\omega_0+\omega_1$ and $\lambda-\mu=\delta$. 
\end{enumerate}
\end{thm}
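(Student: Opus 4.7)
The strategy is to prove the biconditional by establishing the two implications separately, leaning on Theorem \ref{1sttheo} and Lemma \ref{fun} as the backbone.

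For the forward direction, assume $\lambda$ covers $\mu$. By Theorem \ref{1sttheo}, $\lambda - \mu$ is either $\a_K$ for some proper connected subdiagram $K$, or $\delta$, or one of the exceptional roots in the $G_2$, $D_4^{(3)}$, or $G_2^{(1)}$ situations. In every case the root has connected support, which yields the connectedness of $I$. If $\lambda - \mu = \delta$, Lemma \ref{fun} immediately places us in one of (f)--(i). The exceptional items (3) and (4) of Theorem \ref{1sttheo} become (d) and (e) after reading off the allowed range of $\mu(\a_i^{\vee})$ from the dominance constraint on $\lambda$. In the remaining case $\lambda - \mu = \a_K$ we have $I = K$, and the subcases $|K| = 1$ and $K = J$ yield (a) and (b) respectively.

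The nontrivial subcase is $\lambda - \mu = \a_I$ with $J \subsetneq I$: the plan is to prove that $I \setminus J = \{i\}$, $\Phi_I$ is of type $B_l$, $\a_i$ is the short end, and $\mu(\a_i^{\vee}) = 1$ (case (c)). Dominance of $\mu + \a_I$ at any $\a_i^{\vee}$ with $i \in I \setminus J$ forces $\a_I(\a_i^{\vee}) \geq -\mu(\a_i^{\vee})$. Because $\a_I$ is the short highest root of the irreducible finite root system $\Phi_I$, Lemma \ref{rem} together with a case-by-case inspection of finite Dynkin diagrams leaves essentially only $\Phi_I$ of type $B_l$ with $\a_i$ short (the $G_2$ alternative recovers case (d)). To pin down $\mu(\a_i^{\vee}) = 1$ I use the covering hypothesis itself: if $\mu(\a_i^{\vee}) \geq 2$, then taking $L = J$ (of type $A_{l-1}$) and applying Proposition \ref{mainprop} produces a dominant weight $\mu + \a_L$ strictly between $\mu$ and $\mu + \a_I$, contradicting the cover.

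For the reverse direction, each of (a)--(i) is verified directly. Case (a) is immediate since no nonzero shorter positive sum of simple roots exists below a single simple root. For (b)--(e) any candidate $\nu$ with $\mu < \nu < \lambda$ can be written as $\mu + \b$ with $\b$ a strictly smaller positive integer combination of simple roots in $I$; dominance of $\nu$ then fails at some $\a_j^{\vee}$, either inside $J$ by minimality of the short highest root (Proposition \ref{mainprop}) or at the vertex $i \in I \setminus J$ because $\mu(\a_i^{\vee})$ is too small to absorb the negative pairing. For (f)--(i) the description of $\delta$ together with Lemma \ref{fun} rules out intermediate dominants, while (h) and (i) are checked by direct inspection of the low-rank diagrams. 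The main obstacle I anticipate is the forward subcase $J \subsetneq I$, where the combinatorial exclusion of intermediates $\mu + \a_L$ must interact carefully with Proposition \ref{mainprop}, Lemma \ref{rem}, and the finite Dynkin diagram classification.
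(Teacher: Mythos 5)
Your overall architecture matches the paper's: forward direction from Theorem \ref{1sttheo} plus Lemma \ref{fun}, reverse direction by case-by-case verification. But there are two genuine gaps. First, in the forward subcase $\lambda-\mu=\a_I$ with $J\subsetneq I$, the inequality you extract from dominance of $\mu+\a_I$, namely $\a_I(\a_i^{\vee})\geq-\mu(\a_i^{\vee})$, is vacuous: $\a_I$ is the short highest root of $\Phi_I$, hence already dominant on $\Phi_I$, so $\a_I(\a_i^{\vee})\geq 0$ for every $i\in I$ and no restriction on $\Phi_I$ or on $I\setminus J$ follows. What actually forces $I=J\cup\{i\}$ with $\Phi_I$ of type $B_l$ (or $G_2$) and $\a_i$ short is the \emph{construction of an intermediate dominant weight when these conditions fail}, via Lemma \ref{main}: one takes $K$ a connected component of $J$ containing a relatively short root, shows $\mu+\a_K$ is dominant unless some $i\in I\setminus J$ has $\a_K(\a_i^{\vee})\leq -2$, and then enlarges to $L=K\cup\{i\}$ using the parity argument $\b|_L(\a_i^{\vee})\in 2\bz$. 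This is exactly the subcase analysis inside the proof of Theorem \ref{1sttheo}; the paper's one-line forward direction implicitly appeals to that proof, and your replacement for it does not work as stated. (Your argument pinning down $\mu(\a_i^{\vee})=1$ via $L=J$ and Proposition \ref{mainprop} is fine once the $B_l$ structure is established.)

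Second, in the reverse direction for cases (f) and (g) you cannot ``rule out intermediate dominants'' by citing Lemma \ref{fun}: that lemma is the implication \emph{cover by $\delta$ $\Rightarrow$ special/short fundamental weight}, not its converse, so it says nothing about whether $\mu+\delta$ actually covers $\mu$ at such a $\mu$. The paper supplies a separate argument: if $\mu+\sum_j k_j\a_j$ covers $\mu$ and some $k_l=0$ with $l\neq i$, dominance at $\a_l^{\vee}$ propagates $k_j=0$ to all neighbours, and connectedness of $S(A)\setminus\{i\}$ forces all $k_j=0$ for $j\neq i$, which is impossible; then $k_i\neq 0$ is forced because $\a_L(\a_i^{\vee})\leq -2$ (using $\a_L+\a_i=\delta$, or the evenness of $\a_j(\a_i^{\vee})$ in case (g)), whence Theorem \ref{1sttheo} leaves only $\delta$. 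Without some such argument your verification of (f) and (g) is incomplete. The remaining cases you outline ((a)--(e), (h), (i)) track the paper's reasoning closely enough.
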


\begin{proof}
If $\lambda$ covers $\mu$, then Theorem \ref{1sttheo} and Lemma \ref{fun} imply one of the above-mentioned cases holds. 

\medskip

Now we will prove the converse i.e. each of the cases (a)-(i) gives rise to a covering relation. If $\lambda-\mu$ is a simple root, then this is immediate. For case (b), assume $I=J$ be a proper subdiagram of $S(A)$, $\lambda-\mu=\a_I$ and $\lambda$ does not cover $\mu$. Since $\mu+\a_I$ does not cover $\mu$, by Theorem \ref{1sttheo}, there exists a proper connected subdiagram $K$ of $I$ such that $\mu+\a_K$ covers $\mu$. As $K$ is a proper subdiagram of $I$, there exists $i\in I$ which is connected to $K$. The fact $$\a_K(\a_i^{\vee})<0$$ implies $\mu(\a_i^{\vee})>0$, contradicting the fact $I=J$. Therefore, $\lambda$ covers $\mu$. This proves the case $(b)$.

\smallskip

 For the cases $(c)$ and $(d)$, observe that $\l-\m\in CR(A)$; say $\l-\m=\eta.$ If $\m+\eta$ does not cover $\m$, then there exists some $\xi<\eta$ in $CR(A)$ such that $\m+\xi$ is dominant. But $\xi<\eta$ implies that $\xi$ is not dominant in $\Phi_I$. Therefore, there exists $j\in I$ such that $\xi(\a_j^{\vee})<0$. As $\m+\xi$ is dominant, we get, $\m(\a_j^{\vee})>0$. Hence $j$ must be same as $i$. As $\a_i$ is the only short root in $\Phi_I$, $\xi(\a_j^{\vee})<0$ implies that $\xi(\a_j^{\vee})=-2$ (in type B) or $\xi(\a_j^{\vee})=-3$ (in type $G_2$). So for dominance of $\mu+\xi$ it requires $\mu(\a_i^{\vee})\geq 2$ and $\mu(\a_i^{\vee})\geq 3$ respectively, a contradiction. This proves the cases $(c)$ and $(d)$. 

\medskip

For the cases $(e)$, notice that $\mu$ is either a fundamental weight or twice a fundamental weight corresponding to the short simple root $\a_i$. A direct check shows that both cases give a covering relation.

\medskip

Let $\lambda$ be a fundamental weight corresponding to the special vertex $i$, i.e. $\lambda(\a_i^{\vee})=1$ and $\lambda(\a_j^{\vee})=0$ for $j\neq i$. We claim $\lambda+\delta$ covers $\lambda$.

\medskip

Assume $\lambda+\sum_{j=0}^n k_j\a_j$ covers $\lambda$ in ($\Lambda^+_m,\leqslant$) and $k_l=0$ for some $l\neq i$. As $$(\lambda+\sum_{j=0}^n k_j\a_j)(\a_l^{\vee})=\sum_{j\neq l} k_j\a_j(\a_l^{\vee})\geq 0, $$
$k_j=0$ if $j$ is connected to $l$. Since $L:=S(A)\setminus\{i\}$ is connected, $k_j=0$ for all $j\neq i$. But $\lambda+k_i\a_i$ is not dominant, therefore $k_l\neq 0$ for all $l\neq i$. Now suppose $k_i=0$. Then by Theorem \ref{1sttheo}, $\lambda+\a_L$ covers $\lambda$. Hence, $$(\lambda+\a_L)(\a_i^{\vee})\geq 0 \textbf{ }\Rightarrow \a_L(\a_i^{\vee})\geq -1.$$
Since $i\notin L$ and $i$ is connected to $L$, $\a_L(\a_i^{\vee})\leq -1$ and therefore $\a_L(\a_i^{\vee})= -1$. This means $|\a_L+\a_i|^2\neq 0$, contradicting the fact that $\a_L+\a_i=\delta$. So, $k_i\neq 0$. Again by applying Theorem \ref{1sttheo}, we get $\lambda+\delta$ covers $\lambda$. This proves the case $(f)$.
 
\medskip

For case (g), suppose $S(A)$ is a Dynkin diagram which is not triply laced, and contains a unique short simple root say, $\a_i$. Suppose $\lambda$ is a fundamental weight corresponding to $\a_i$. Assume $\lambda+\sum_{j=0}^n k_j\a_j$ covers $\lambda$ in ($\Lambda^+_m,\leqslant$). Note that $\a_j(\a_i^{\vee})$ is even for all $j\neq i$. We have, $$(\lambda+\sum_{j=0}^n k_j\a_j)(\a_i^{\vee})=1+\sum_{j\neq i} k_j\a_j(\a_i^{\vee})+2k_i\geq 0. $$

Therefore we get, $k_i\neq 0$. And hence $k_j\neq 0$ for all $j$ by similar arguments as before. Now by Theorem \ref{1sttheo}, we get $\lambda+\delta$ covers $\lambda$. This proves the case $(g)$.

\medskip

For the case $(h)$, we have $A=D_{n+1}^{(2)}$ and $\lambda=\omega_0+\omega_n$. Suppose $\lambda+\sum_{j=0}^n k_j\a_j$ covers $\lambda$ in ($\Lambda^+_m,\leqslant$). Assume $k_j\neq 0$ for some $0\leq j\leq n$. Since for any $0< l< n$, $$(\lambda+\sum_{j=0}^n k_j\a_j)(\a_l^{\vee})=\sum_{j\neq l} k_j\a_j(\a_l^{\vee})+2k_l\geq 0,$$
 we get $k_l\neq 0$ for any $l$ connected to $j$. Therefore $k_l\neq 0$ for all $0< l< n$. Since $\a_1(\a_0^{\vee})=\a_{n-1}(\a_n^{\vee})=-2$, $k_0,k_n\neq 0$. As $\delta(D_{n+1}^{(2)})$ is just the sum of all simple roots, we get that $\lambda+\delta$ covers $\lambda$. An easy check also proves the case $(i)$. This finishes the proof.

\end{proof}

\medskip

\begin{rem}
One of the important observations from Theorem 2.6 in \cite{stem} was that the covering relations in the poset of dominant weights of finite type Kac-Moody algebras are given by positive roots i.e. if $\l$ covers $\m$ in the poset of dominant weights then $\l-\m$ is a positive root. The above theorem proves that this statement is true even in the case of affine type Kac-Moody algebras. The covering relations in this affine case are given by locally short dominant roots, cannonical imaginary roots and exceptional roots. Following the convention of \cite{stem}, a root of the affine Kac-Moody algebra is called exceptional if it is the sum of simple roots that generate a root system of type $G_2$ or $G_2^{(1)}$.
\end{rem}

\section{Basic cells of the lattice of dominant weights}

If $\lambda$ covers $\mu$, we call $\mu$ a cocover of $\lambda$ and it is denoted by $\lambda \longtwoheadrightarrow \mu $. By Theorem \ref{2ndthe}, covering relations in ($\Lambda^+_m,\leqslant$) for any simply laced Dynkin diagram $S(A)$ are given either by a proper subdiagram $K$ of $S(A)$ or by $\delta$. 
Observe that if $\lambda$ has two distinct cocovers $\mu$ and $\mu'$, then by Theorem \ref{2ndthe} both $\mu$ and $\mu'$ correspond to two different proper subdiagrams of $S(A)$ i.e. none of the cocovers corresponds to $\delta$. 
Now we give an explicit description of the basic cell structure in ($\Lambda^+_m,\leqslant$) for type $A_{n+1}^{(1)}$.
\begin{thm}\label{cell}
If $\mu$ and $\mu'$ are two distinct cocovers of $\lambda$ corresponding to the proper subdiagrams $K$ and $K'$ respectively, then the interval $X=[\mu \wedge \mu',\lambda]$ has one of the following structures:
\begin{enumerate}
\item If $K$ and $K'$ satisfy one of the following conditions:
\begin{itemize}
\item[(a)] $K$ and $K'$ both are singleton diagrams.
\item[(b)] $K\cup K'$ is a disconnected subdiagram of $S(A)$
 \item[(c)] $K\cup K'$ is connected and $K\cap K' \neq \emptyset$, 
\end{itemize}

\noindent
then,

\begin{tikzpicture}[scale=.5]

  \begin{scope}[shift={(55,0)}]
  \filldraw (2,0) node[above] {$\lambda$} circle (.25cm);
  \filldraw (0,-2.5) node[left] {$X=\text{ } \mu$} circle (.25cm);
  \filldraw (4,-2.5) node[right] {$\mu'$} circle (.25cm);
\filldraw (2,-5) node[below] {$\mu \wedge \mu'$} circle (.25cm);
  
  \draw (2,0) -- +(2,-2.5);
  \draw (2,0) -- +(-2,-2.5);
  \draw (0,-2.5) -- +(2,-2.5);
  \draw (4,-2.5) -- +(-2,-2.5);
  
  \draw[thick] (0.32,-2.1) -- +(0.05,.7);
  \draw[thick] (0.32,-2.1) -- +(.7,.2);
\draw[thick] (0.16,-2.3) -- +(0.05,.7);
 \draw[thick] (0.16,-2.3) -- +(.7,.2);
  \draw[thick] (3.68,-2.1) -- +(-0.05,.7);
\draw[thick] (3.68,-2.1) -- +(-.7,.2);
\draw[thick] (3.84,-2.3) -- +(-0.05,.7);
 \draw[thick] (3.84,-2.3) -- +(-.7,.2);
 \draw[thick] (2.32,-4.6) -- +(0.05,.7);
  \draw[thick] (2.32,-4.6) -- +(.7,.2);
\draw[thick] (2.16,-4.8) -- +(0.05,.7);
 \draw[thick] (2.16,-4.8) -- +(.7,.2);
\draw[thick] (1.68,-4.6) -- +(-0.05,.7);
\draw[thick] (1.68,-4.6) -- +(-.7,.2);
\draw[thick] (1.84,-4.8) -- +(-0.05,.7);
 \draw[thick] (1.84,-4.8) -- +(-.7,.2);

\end{scope}
\end{tikzpicture}

\item If $K\cup K'$ is connected, $K\cap K' = \emptyset$, $|K|=1$ and $|K'|>1$  then,

\begin{tikzpicture}[scale=.5]

  \begin{scope}[shift={(55,0)}]
  \filldraw (2,0) node[above] {$\lambda$} circle (.25cm);
  \filldraw (0,-2) node[left] {$\mu$} circle (.25cm);
\filldraw (0,-4) node[left] {$$} circle (.25cm);
  \filldraw (4,-3) node[right] {$\mu'$} circle (.25cm);
\filldraw (2,-6) node[below] {$\mu \wedge \mu'$} circle (.25cm);
  
  \draw (2,0) -- +(-2,-2);
\draw (0,-2) -- node[left] {$X= \hspace{20pt}$}+(0,-2) ;

\draw (0,-4) -- +(2,-2);
\draw (2,0) -- +(2,-3);
\draw (4,-3) -- +(-2,-3);
\draw[thick] (0.35,-1.65) -- +(0.09,.7);
  \draw[thick] (0.35,-1.65) -- +(.7,.2);
\draw[thick] (0.19,-1.82) -- +(0.09,.7);
 \draw[thick] (0.19,-1.82) -- +(.75,.19);
\draw[thick] (0,-3.55) -- +(-0.35,.6);
  \draw[thick] (0,-3.55) -- +(.35,.6);
\draw[thick] (0,-3.75) -- +(-0.36,.59);
  \draw[thick] (0,-3.75) -- +(.36,.59);
\draw[thick] (3.72,-2.58) -- +(-0.69,.26);
 \draw[thick] (3.72,-2.58) -- +(.06,.75);
\draw[thick] (3.86,-2.78) -- +(-.75,.25);
 \draw[thick] (3.86,-2.78) -- +(.08,.72);

\draw[thick] (2.3,-5.55) -- +(0,.7);
  \draw[thick] (2.3,-5.55) -- +(.65,.22);
\draw[thick] (2.14,-5.78) -- +(-0.03,.7);
 \draw[thick] (2.14,-5.78) -- +(.7,.22);
\draw[thick] (1.65,-5.65) -- +(0,.7);
  \draw[thick] (1.65,-5.65) -- +(-.72,.13);
\draw[thick] (1.82,-5.82) -- +(-.75,.12);
 \draw[thick] (1.82,-5.82) -- +(.01,.72);

\end{scope}
\end{tikzpicture}
\item If $K\cup K'$ is connected, $K\cap K' = \emptyset$, $|K|>1$ and $|K'|>1$  then,

\begin{tikzpicture}[scale=.5]

  \begin{scope}[shift={(55,0)}]
  \filldraw (2,0) node[above] {$\lambda$} circle (.25cm);
  \filldraw (0,-2) node[left] {$\mu$} circle (.25cm);
\filldraw (0,-4) node[left] {$$} circle (.25cm);
 \filldraw (2,-2) node[above] {$$} circle (.25cm);
  \filldraw (4,-2) node[right] {$\mu'$} circle (.25cm);
\filldraw (4,-4) node[right] {$$} circle (.25cm);
\filldraw (2,-6) node[below] {$\mu \wedge \mu'$} circle (.25cm);

  \draw (2,0) -- +(-2,-2);
\draw (2,0) -- +(0,-2);
\draw (2,-2) -- +(-2,-2);
\draw (2,-2) -- +(2,-2);
\draw (0,-2) --node[left] {$X= \hspace{20pt}$}  +(0,-2);
\draw (0,-4) -- +(2,-2);
\draw (2,0) -- +(2,-2);
\draw (4,-2) -- +(0,-2);
\draw (4,-4) -- +(-2,-2);
  
\draw[thick] (0.35,-1.65) -- +(0.09,.7);
  \draw[thick] (0.35,-1.65) -- +(.7,.2);
\draw[thick] (0.19,-1.82) -- +(0.09,.7);
 \draw[thick] (0.19,-1.82) -- +(.75,.19);

\draw[thick] (3.65,-1.65) -- +(-.78,.29);
  \draw[thick] (3.65,-1.65) -- +(-.1,.75);
\draw[thick] (3.81,-1.82) -- +(-.8,.25);
 \draw[thick] (3.81,-1.82) -- +(-.08,.8);

\draw[thick] (0,-3.55) -- +(-0.35,.6);
  \draw[thick] (0,-3.55) -- +(.35,.6);
\draw[thick] (0,-3.75) -- +(-0.36,.59);
  \draw[thick] (0,-3.75) -- +(.36,.59);

\draw[thick] (0.65,-3.35) -- +(0.09,.7);
  \draw[thick] (0.65,-3.35) -- +(.7,.2);
\draw[thick] (0.49,-3.52) -- +(0.09,.7);
 \draw[thick] (0.49,-3.52) -- +(.75,.19);

\draw[thick] (4,-3.55) -- +(-0.35,.6);
  \draw[thick] (4,-3.55) -- +(.35,.6);
\draw[thick] (4,-3.75) -- +(-0.36,.59);
  \draw[thick] (4,-3.75) -- +(.36,.59);

\draw[thick] (2,-1.55) -- +(-0.35,.6);
  \draw[thick] (2,-1.55) -- +(.35,.6);
\draw[thick] (2,-1.75) -- +(-0.36,.59);
  \draw[thick] (2,-1.75) -- +(.36,.59);

\draw[thick] (3.35,-3.35) -- +(-.78,.29);
  \draw[thick] (3.35,-3.35) -- +(-.1,.75);
\draw[thick] (3.51,-3.52) -- +(-.8,.25);
 \draw[thick] (3.51,-3.52) -- +(-.08,.8);

\draw[thick] (1.65,-5.65) -- +(0,.7);
  \draw[thick] (1.65,-5.65) -- +(-.72,.13);
\draw[thick] (1.82,-5.82) -- +(-.75,.12);
 \draw[thick] (1.82,-5.82) -- +(.01,.72);

\draw[thick] (2.35,-5.65) -- +(0,.7);
\draw[thick] (2.35,-5.65) -- +(.72,.13);
\draw[thick] (2.18,-5.82) -- +(.01,.72);
\draw[thick] (2.18,-5.82) -- +(.75,.11);

\end{scope}
\end{tikzpicture}
\end{enumerate}
\end{thm}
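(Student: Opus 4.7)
The strategy is to list the elements of the interval $X=[\mu\wedge\mu',\lambda]$ explicitly and then read off its Hasse diagram from Theorem \ref{2ndthe}. In type $A_{n+1}^{(1)}$ every proper connected subdiagram is an arc of type $A$, so its short highest root is $\alpha_L=\sum_{p\in L}\alpha_p$. Writing $\mu=\lambda-\alpha_K$ and $\mu'=\lambda-\alpha_{K'}$, the coordinatewise meet formula from Lemma \ref{lattice} gives $\mu\wedge\mu'=\lambda-\sum_{p\in K\cup K'}\alpha_p$, so every $\nu\in X$ takes the form $\nu=\lambda-\sum_{p\in K\cup K'}n_p\alpha_p$ with $n_p\in\{0,1\}$; I identify $\nu$ with its support $S=\{p:n_p=1\}$. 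Dominance is automatic outside $K\cup K'$ and, at each $p\in K\cup K'$, reduces to $\lambda(\alpha_p^\vee)-2n_p+\sum_{q\sim p,\,q\in K\cup K'}n_q\geq 0$. The cocover condition of Theorem \ref{2ndthe}(b) forces $\lambda(\alpha_p^\vee)=1$ at the two endpoints of $K$ and $=0$ on its interior, and similarly for $K'$; the central propagation rule is then that whenever $p\in S$ is interior to $K$ or to $K'$, both neighbours of $p$ must also lie in $S$.

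I then enumerate valid $S$ case by case. In (1a) there are no propagation constraints and all four subsets are dominant. In (1b) the constraints on $K$ and $K'$ decouple, and within each block of size $>1$ propagation plus the endpoint rules force $S\cap K\in\{\emptyset,K\}$ (similarly for $K'$), again giving a diamond. In (1c) I first show that the two patterns prescribed for $\lambda$ are compatible only when $|K|,|K'|\geq 2$ and $K\cap K'=\{b\}$ is a single common endpoint, so $K=[a,b]$ and $K'=[b,d]$; propagation from both sides then still collapses the valid subsets to $\{\emptyset,K,K',K\cup K'\}$. In (2), with $K=\{i\}$ adjacent to the endpoint $j_1$ of $K'$, the relaxed endpoint rule at $j_1$ admits the extra configuration $S=\{i,j_1\}$, producing the fifth element $\lambda-\alpha_i-\alpha_{j_1}$ of the pentagon. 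In (3), both junction endpoints $i_a\in K$ and $j_1\in K'$ carry relaxed rules that independently allow $S\cap K=\{i_a\}$ (provided $j_1\in S$) and $S\cap K'=\{j_1\}$ (provided $i_a\in S$); tabulating the coupled $3\times 3$ possibilities leaves exactly seven valid subsets, namely the four corners $\emptyset,K,K',K\cup K'$ and the three mixed ones $\{i_a,j_1\}$, $K\cup\{j_1\}$, $\{i_a\}\cup K'$.

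Finally, the covers inside $X$ are read off directly from Theorem \ref{2ndthe}: each claimed edge has difference either a simple root (case (a)) or a subdiagram root $\alpha_L$ satisfying the vanishing criterion of case (b), and no shortcut cover is possible since every longer candidate difference factors through an element already in the list. The main obstacle is case (3): one has to juggle the coupled junction rules at $i_a$ and $j_1$ simultaneously, and in particular the third cocover $\lambda-\alpha_{\{i_a,j_1\}}$ of $\lambda$ sitting in the middle of the cell is not visible from the initial data $\mu,\mu'$ and arises only from the junction analysis, after which one must still argue that the two remaining intermediate vertices come uniquely from the mixed configurations $K\cup\{j_1\}$ and $\{i_a\}\cup K'$.
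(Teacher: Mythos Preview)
Your approach and the paper's follow the same case breakdown and both lean on Theorem~\ref{2ndthe}, but you frame the argument more combinatorially: rather than verifying one by one that the claimed edges are covers (as the paper does), you parametrize every $\nu\in[\mu\wedge\mu',\lambda]$ by a subset $S\subseteq K\cup K'$ and use the interior propagation rule to cut down the admissible $S$. This buys you a cleaner justification that the listed elements are \emph{all} the elements of $X$---a point the paper leaves largely implicit, since it only checks that the displayed covers hold and does not explicitly exclude further elements.

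There is, however, a genuine gap in your case~(1c). You assert that compatibility of the two $\lambda$-patterns forces $K\cap K'=\{b\}$ to be a single common endpoint. This is false in the cyclic Dynkin diagram: two proper arcs can share \emph{both} endpoints when together they cover the whole cycle. For instance, in $A_4^{(1)}$ take $K=\{0,1,2\}$ and $K'=\{2,3,4,0\}$; both have endpoints $0$ and $2$, so $K\cap K'=\{0,2\}$ and $K\cup K'=S(A)$. The paper records this correctly as $|K\cap K'|\le 2$. Your propagation argument in fact extends to this configuration without change (every non-endpoint vertex is interior to exactly one of $K,K'$, so propagation again forces $S\in\{\emptyset,K,K',K\cup K'\}$), and the diamond conclusion survives; but your case split as written does not account for it. A similar caution applies in cases~(2) and~(3): you tacitly assume the junction between $K$ and $K'$ occurs at a single pair of adjacent endpoints, whereas on the cycle it can happen that $K\cup K'=S(A)$ with two junctions---you should check that your enumeration (and the paper's) still gives the stated pictures there.
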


\begin{proof}

We investigate each cases observing that they are exhaustive. In case (1) (a), $\mu=\lambda-\a_i$ and $\mu'=\lambda-\a_j$ for some $i\neq j$ and hence we get $\mu \wedge \mu'=\lambda-\a_i-\a_j$. Clearly $\mu \wedge \mu'$ is a cocover of both $\mu$ and $\mu'$ with no other elements in the interval.

\medskip

In case (1) (b), $\mu=\lambda-\a_K$ and $\mu'=\lambda-\a_K'$ such that $K\cup K'$ is a disconnected subdiagram of $S(A)$. Therefore we get $\mu \wedge \mu'=\lambda-\a_K-\a_{K'}$. First assume both $K$ and $K'$ are not singleton diagrams. As $\lambda \longtwoheadrightarrow \l-\a_K $, by Theorem \ref{2ndthe} (b), we obtain $(\l-\a_K)(\a_i^{\vee})=0$ for all $i\in K$. Similarly we get $(\l-\a_{K'})(\a_i^{\vee})=0$ for all $i\in K'$. Hence, $$(\lambda-\a_K-\a_{K'})(\a_i^{\vee})=0 \quad \text{for all } i\in K',$$ $$(\lambda-\a_K-\a_{K'})(\a_i^{\vee})=0 \quad \text{for all } i\in K,$$ since $K\cup K'$ is disconnected. Again by applying Theorem \ref{2ndthe} (b), we get $\l-\a_K\longtwoheadrightarrow \lambda-\a_K-\a_{K'}$ and $\lambda-\a_K'\longtwoheadrightarrow\lambda-\a_K-\a_{K'}$. If one of the two diagrams $K$ and $K'$ is a singleton diagram, then same arguments as above show that $X$ has the same diamond structure.

\medskip

In case (1) (c), first notice that none of the diagrams $K$ and $K'$ is a singleton diagram as  $K\cap K'\neq \emptyset$. As $\lambda \longtwoheadrightarrow \l-\a_K $, by Theorem \ref{2ndthe} (b), we obtain $(\l-\a_K)(\a_i^{\vee})=0$ for all $i\in K$. So, $$\l(\a_i^{\vee})=\a_K(\a_i^{\vee})=
\begin{cases}
             0  & \text{if $i$ is not an end node of } K, \\
             1  & \text{if $i$ is an end node of } K.
       \end{cases}$$
Similarly  $$\l(\a_i^{\vee})=\a_{K'}(\a_i^{\vee})=
\begin{cases}
             0  & \text{if $i$ is not an end node of } K', \\
             1  & \text{if $i$ is an end node of } K'.
       \end{cases}$$
Therefore $K\cap K'$ can contain only the common end nodes of $K$ and $K'$ and hence $|K\cap K'|\leq 2$. In this case we have $\mu \wedge \mu'=\lambda-\a_K-\a_{K'}+\sum\limits_{j\in K\cap K'}\a_{j}$. So we get, $$(\lambda-\a_K-\a_{K'}+\sum\limits_{j\in K\cap K'}\a_{j})(\a_i^{\vee})=0 \quad \text{ for all } i\in K' \setminus K,$$ $$(\lambda-\a_K-\a_{K'}+\sum\limits_{j\in K\cap K'}\a_{j})(\a_i^{\vee})=0 \quad \text{for all } i\in K \setminus K'.$$  Again by applying Theorem \ref{2ndthe} (b), we get $\l-\a_K\longtwoheadrightarrow \lambda-\a_K-\a_{K'}+\sum\limits_{j\in K\cap K'}\a_{j}$ and $\lambda-\a_K'\longtwoheadrightarrow\lambda-\a_K-\a_{K'}+\sum\limits_{j\in K\cap K'}\a_{j}$. 

\medskip

In case (2), we have $\mu=\lambda-\a_i$, $\mu'=\lambda-\a_{K'}$ such that $i\notin K'$ as $K\cap K'=\emptyset$ and $i$ is connected to some end node of $K'$ say $i_1$. In this case we get $\mu \wedge \mu'=\lambda-\a_i-\a_{K'}$. $\l \longtwoheadrightarrow \lambda-\a_{K'}$ implies $(\l-\a_{K'})(\a_i^{\vee})=0$ for all $i\in K'$ i.e. $$\l(\a_i^{\vee})=\a_{K'}(\a_i^{\vee})=
\begin{cases}
             0  & \text{if $i$ is not an end node of } K', \\
             1  & \text{if $i$ is an end node of } K'.
       \end{cases}$$ Notice that $$(\l-\a_i-\a_{i_1})(\a_{i_1}^{\vee})=1+1-2=0,$$
and $(\l-\a_i-\a_{i_1})(\a_{i}^{\vee})\geq 0$ as $\l-\a_i$ is dominant. This proves $\l-\a_i-\a_{i_1}$ is dominant. By Theorem \ref{2ndthe} (a); we get $\l-\a_i \longtwoheadrightarrow \lambda-\a_i-\a_{i_1}$. Now the fact that we have $$(\l-\a_i-\a_{K'})(\a_j^{\vee})=0 \quad \text{for all } j\in K' \text{ and } j\neq i_1,$$ together with Theorem \ref{2ndthe} (b) imply $\l-\a_i-\a_{K'}\longtwoheadrightarrow \lambda-\a_i-\a_{i_1}$. Therefore we have the following structure:

\begin{tikzpicture}[scale=.5]

  \begin{scope}[shift={(55,0)}]
  \filldraw (2,0) node[above] {$\lambda$} circle (.25cm);
  \filldraw (0,-2) node[left] {$\l-\a_i$} circle (.25cm);
\filldraw (0,-4) node[left] {$\l-\a_i-\a_{i_1}$} circle (.25cm);
  \filldraw (4,-3) node[right] {$\l-\a_{K'}$} circle (.25cm);
\filldraw (2,-6) node[below] {$\l-\a_i-\a_{K'}$} circle (.25cm);
  
  \draw (2,0) -- +(-2,-2);
\draw (0,-2) -- node[left] {$\hspace{180pt}$}+(0,-2) ;

\draw (0,-4) -- +(2,-2);
\draw (2,0) -- +(2,-3);
\draw (4,-3) -- +(-2,-3);
\draw[thick] (0.35,-1.65) -- +(0.09,.7);
  \draw[thick] (0.35,-1.65) -- +(.7,.2);
\draw[thick] (0.19,-1.82) -- +(0.09,.7);
 \draw[thick] (0.19,-1.82) -- +(.75,.19);
\draw[thick] (0,-3.55) -- +(-0.35,.6);
  \draw[thick] (0,-3.55) -- +(.35,.6);
\draw[thick] (0,-3.75) -- +(-0.36,.59);
  \draw[thick] (0,-3.75) -- +(.36,.59);
\draw[thick] (3.72,-2.58) -- +(-0.69,.26);
 \draw[thick] (3.72,-2.58) -- +(.06,.75);
\draw[thick] (3.86,-2.78) -- +(-.75,.25);
 \draw[thick] (3.86,-2.78) -- +(.08,.72);

\draw[thick] (2.3,-5.55) -- +(0,.7);
  \draw[thick] (2.3,-5.55) -- +(.65,.22);
\draw[thick] (2.14,-5.78) -- +(-0.03,.7);
 \draw[thick] (2.14,-5.78) -- +(.7,.22);
\draw[thick] (1.65,-5.65) -- +(0,.7);
  \draw[thick] (1.65,-5.65) -- +(-.72,.13);
\draw[thick] (1.82,-5.82) -- +(-.75,.12);
 \draw[thick] (1.82,-5.82) -- +(.01,.72);

\end{scope}
\end{tikzpicture}

\medskip

In case (3), we have $\mu=\lambda-\a_K$ and $\mu'=\lambda-\a_{K'}$ for some $K\cap K'=\emptyset$. Hence we get  $\mu \wedge \mu'=\lambda-\a_K-\a_{K'}$. As explained above $\l$ takes value 0 at mid nodes and value 1 at end nodes of of $K$ and $K'$. As $K\cup K'$ is connected, one end node of $K$ say, $i$ is connected to one end node of $K'$ say, $j$. Now let us consider $\l-\a_K-\a_j$. This is dominant as $\l-\a_K$ is dominant and $$(\l-\a_K-\a_j)(\a_j^{\vee})=1+1-2=0.$$
Similarly $\l-\a_{K'}-\a_i$ is also dominant. Notice that we have, $$(\l-\a_i-\a_j)(\a_j^{\vee})=1+1-2=0,$$ $$(\l-\a_i-\a_j)(\a_i^{\vee})=1-2+1=0.$$
So, $\l-\a_i-\a_j$ is also dominant. Since $(\l-\a_j)(\a_j^{\vee})=1-2<0$ and $(\l-\a_i)(\a_i^{\vee})=1-2<0$, $\l-\a_i$ and $\l-\a_j$ are not dominant. Hence we obtain $\l \longtwoheadrightarrow \l-\a_i-\a_j$. We also have, $$(\l-\a_K-\a_{K'})(\a_l^{\vee})=0 \quad \text{for all } l\in K \text{ and } l\neq i$$, as $(\l-\a_K)(\a_l^{\vee})=0$ for all $l\in K$. So by applying Theorem \ref{2ndthe} (b), we obtain $\l-\a_{K'}-\a_i\longtwoheadrightarrow \l-\a_K-\a_{K'}$. Similar arguments prove that $\l-\a_K-\a_j\longtwoheadrightarrow \l-\a_K-\a_{K'}$. Hence we have the following structure:

 \begin{tikzpicture}[scale=.65]

  \begin{scope}[shift={(55,0)}]
  \filldraw (2,0) node[above] {$\lambda$} circle (.25cm);
  \filldraw (0,-2) node[left] {$\l-\a_K$} circle (.25cm);
\filldraw (0,-4) node[left] {$\l-\a_K-\a_j$} circle (.25cm);
 \filldraw (2,-2) node[below] {}circle (.25cm);
\node[label=above:{$\l-\a_i-\a_j$} ] at (2.1,-4.7) {};
  \filldraw (4,-2) node[right] {$\l-\a_{K'}$} circle (.25cm);
\filldraw (4,-4) node[right] {$\l-\a_{K'}-\a_i$} circle (.25cm);
\filldraw (2,-6) node[below] {$\l-\a_K-\a_{K'}$} circle (.25cm);

  \draw (2,0) -- +(-2,-2);
\draw (2,0) -- +(0,-2);
\draw (2,-2) -- +(-2,-2);
\draw (2,-2) -- +(2,-2);
\draw (2,-2) -- +(0,-1.6);
\draw (0,-2) --node[left] {$\hspace{165pt}$}  +(0,-2);
\draw (0,-4) -- +(2,-2);
\draw (2,0) -- +(2,-2);
\draw (4,-2) -- +(0,-2);
\draw (4,-4) -- +(-2,-2);
  
\draw[thick] (0.35,-1.65) -- +(0.09,.7);
  \draw[thick] (0.35,-1.65) -- +(.7,.2);
\draw[thick] (0.19,-1.82) -- +(0.09,.7);
 \draw[thick] (0.19,-1.82) -- +(.75,.19);

\draw[thick] (3.65,-1.65) -- +(-.78,.29);
  \draw[thick] (3.65,-1.65) -- +(-.1,.75);
\draw[thick] (3.81,-1.82) -- +(-.8,.25);
 \draw[thick] (3.81,-1.82) -- +(-.08,.8);

\draw[thick] (0,-3.55) -- +(-0.35,.6);
  \draw[thick] (0,-3.55) -- +(.35,.6);
\draw[thick] (0,-3.75) -- +(-0.36,.59);
  \draw[thick] (0,-3.75) -- +(.36,.59);

\draw[thick] (0.65,-3.35) -- +(0.09,.7);
  \draw[thick] (0.65,-3.35) -- +(.7,.2);
\draw[thick] (0.49,-3.52) -- +(0.09,.7);
 \draw[thick] (0.49,-3.52) -- +(.75,.19);

\draw[thick] (4,-3.55) -- +(-0.35,.6);
  \draw[thick] (4,-3.55) -- +(.35,.6);
\draw[thick] (4,-3.75) -- +(-0.36,.59);
  \draw[thick] (4,-3.75) -- +(.36,.59);

\draw[thick] (2,-1.55) -- +(-0.35,.6);
  \draw[thick] (2,-1.55) -- +(.35,.6);
\draw[thick] (2,-1.75) -- +(-0.36,.59);
  \draw[thick] (2,-1.75) -- +(.36,.59);

\draw[thick] (2,-3.65) -- +(-0.35,.6);
\draw[thick] (2,-3.65) -- +(.35,.6);

\draw[thick] (3.35,-3.35) -- +(-.78,.29);
  \draw[thick] (3.35,-3.35) -- +(-.1,.75);
\draw[thick] (3.51,-3.52) -- +(-.8,.25);
 \draw[thick] (3.51,-3.52) -- +(-.08,.8);

\draw[thick] (1.65,-5.65) -- +(0,.7);
  \draw[thick] (1.65,-5.65) -- +(-.72,.13);
\draw[thick] (1.82,-5.82) -- +(-.75,.12);
 \draw[thick] (1.82,-5.82) -- +(.01,.72);

\draw[thick] (2.35,-5.65) -- +(0,.7);
\draw[thick] (2.35,-5.65) -- +(.72,.13);
\draw[thick] (2.18,-5.82) -- +(.01,.72);
\draw[thick] (2.18,-5.82) -- +(.75,.11);

\end{scope}
\end{tikzpicture}

This finishes the proof.
\end{proof}

\bigskip

\medskip

\end{document}